\newcommand\Z{\mathbb Z}
\def\R{\mathbb R}
\def\Q{\cal Q}
\newcommand{\norm}[1]{\left\lVert#1\right\rVert}
\newcommand{\inner}[2]{\langle #1, #2 \rangle}
\newcommand\C{\mathcal{C}}
\DeclareMathOperator{\Real}{Re}
\let\conjugate\overline
 \newcommand\Span{\operatorname{span}}
 \newcommand\dsize{\displaystyle}
\def\C{\mathbb C}
\newtheorem{theorem}{Theorem}
\numberwithin{equation}{section}
\newtheorem{lemma}[theorem]{Lemma}
\renewcommand\r{\rangle}
 \renewcommand\l{\langle}
\newcommand\cal{\mathcal}
 \newtheorem{proposition}[theorem]{Proposition}
\newtheorem{corollary}[theorem]{Corollary}
\begin{document}

\title{Three problems on exponential bases }
\author{Laura De Carli}
\address{Laura De Carli,  Florida International University,
Department of Mathematics,
Miami, FL 33199, USA}
\email{decarlil@fiu.edu}
\author{Alberto Mizrahi}
\address{ Alberto Mizrahi, Florida International University,
Department of Mathematics,
Miami, FL 33199, USA}
\email{amizr007@fiu.edu }
\author{Alexander Tepper}
\address{ Alexander Tepper, Florida International University,
Department of Mathematics,
Miami, FL 33199, USA}
\email{atepp001@fiu.edu}
\subjclass[2010]{Primary: 42C15  
Secondary classification: 42C30.  }
\keywords{Exponential bases, frames, Riesz sequences, lattices }
\begin{abstract}
We consider three special  and significant cases  of the following   problem. Let $D\subset\R^d$ be a  (possibly unbounded) set of finite Lebesgue measure.
Let $E( \Z^d)=\{e^{2\pi i x\cdot n}\}_{n\in\Z^d}$    be the standard exponential basis on the unit   cube  of $\R^d$. 
Find conditions on  $D$  for which  $E(\Z^d)$ is a frame,   a Riesz sequence, or a   Riesz basis for $L^2(D)$.
\end{abstract}
\maketitle
\section{Introduction}
\label{sec:intro}

We are interested in   the following  problem.
Let $D\subset\R^d$ be a   set of Lebesgue measure  $|D|<\infty$.
Let $E( \Z^d)=\{e^{2\pi i n\cdot x}\}_{n\in\Z^d}$    be the standard exponential basis for the unit   cube $  \Q_d =[-\frac 12, \frac 12]^d$.  
Can  $E(\Z^d)$ be frame, or a Riesz sequence, or a   Riesz basis for $L^2(D)$?

We have recalled definitions  and general facts about  frames, Riesz sequences and    Riesz bases   in  Section 2. 
%

Our investigation was motivated  by the following  problems:

\begin{itemize}
\item[{\bf P1.}] {\it  (The broken interval)}. Let    $ J= [0,\alpha)\cup [\alpha+r, L+r)$, with $0<\alpha<\, L$ and $r>0$.  For which values of the parameters   is the set $E(\Z)$ a Riesz basis,  a Riesz sequence or a frame  in $L^2(J)?$
\end{itemize}

 It is easy to verify that  $E(\Z)$ is a frame  on $J$ when $L+r\leq 1$  and it is a Riesz sequence when either $\alpha\ge 1$ of $L-\alpha\ge 1$ (see also Lemma \ref{L-dil-basis}). It is proved in \cite{Laba}  that $E(\Z)$ is an orthonormal  basis for $J$  if and only if     the measure of $J$  is $L=1$ and  the "gap" $  r$ is a non-negative  integer.  

\begin{itemize}
\item[{\bf P2.}] {\it  (The rotated square)}. Let   $Q_h= [-\frac h2, \frac h2]\times [-\frac h2, \frac h2]$ be a  square with side $h>0$. For $\theta\in [0, 2\pi)$, we let $\rho_\theta:\R^2\to\R^2$  be the rotation  $\rho_\theta(x,y)= (x\cos\theta-y\sin\theta, \ x\sin\theta+y\cos\theta)$.
For which values of $\theta $ is   $E(\Z^2)$   a Riesz basis,  a Riesz sequence or a frame on  $ \rho_\theta( Q_h)$?
\end{itemize}

Also the solution to this problem is  trivial only for certain values of the parameters (for example, when $\theta$ is an integer multiple of $\frac \pi 2$).  

\medskip
The next problem  was kindly suggested  by Chun Kit Lai.

\begin{itemize}
\item[{\bf P3.}] {\it  (The translated parallelepiped)}. Let  $P \subset \R^d$ be a parallelepiped with sides parallel to  the vectors $  v_1$, ...,\, $  v_d\in\R^d$.  Find conditions on  these vectors for which  that the set $E(\Z^d)$  is a  Riesz basis,   a  Riesz sequence, or a   frame in $L^2(P)$                                                                                                                                           .
\end{itemize}

We recall that a {\it lattice}   is the image of $\Z^d$ by a linear invertible transformation   \newline $B:\R^d\to\R^d$   
 and we observe that  
Problem 3 is equivalent to the following: {\it for which  lattices $\Lambda=B\Z^d$  is the set  $E(B\Z^d)=\{e^{2\pi i Bn\cdot x}\}_{n\in\Z^d}$  a  Riesz basis, or a  Riesz sequence, or a   frame in  $L^2(\Q_d)$?   }  

Problem 3 is related to certain optimization problems on lattices that have deep applications in computer sciences and in cryptography. See Section  7.1 for details and references.

\medskip
We first prove necessary and sufficient  conditions for which $E(\Z^d)$ is a Riesz sequence   or a frame on a given domain $D\subset \R^d$ and then we completely solve   Problems 1, 2 and 3.
 
 We      let \begin{equation}\label{e-Phi} \Phi(x)=\sum_{m\in\Z^d}   \chi_D(x+m), \end{equation} where      $\chi_D$  denotes   the characteristic function of   $D$. Note that $\Phi(x)$ only takes  non-negative integer values.  Our first result is  the following  
  
\begin{theorem}
 \label{T-Riesz }
$E(\Z^d)$ is a Riesz sequence in $L^2(D)$
if and only if  there exist  constants $0<A\leq B <\infty$ for which $A\leq  \Phi(x) \leq B$ for a.e.   $x\in \Q_d $.   
 
 \end{theorem}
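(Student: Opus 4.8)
The plan is to reduce the Riesz-sequence inequalities to a pair of pointwise bounds on the weight $\Phi$ by a periodization (folding) argument. Fix a finite scalar sequence $\{c_n\}$ and set $f(x)=\sum_n c_n e^{2\pi i n\cdot x}$, a $\Z^d$-periodic trigonometric polynomial on $\R^d$. First I would compute $\norm{f}_{L^2(D)}^2$: writing $\int_D|f|^2=\int_{\R^d}\chi_D|f|^2$, I split $\R^d$ into the integer translates $\Q_d+m$, change variables $x\mapsto x+m$ on each piece, and use the periodicity $|f(x+m)|^2=|f(x)|^2$ together with Tonelli (all terms are non-negative) to fold the sum back onto the cube. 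This yields the key identity
\[
\norm{f}_{L^2(D)}^2=\int_{\Q_d}\Big(\sum_{m\in\Z^d}\chi_D(x+m)\Big)|f(x)|^2\,dx=\int_{\Q_d}\Phi(x)\,|f(x)|^2\,dx .
\]
Since $\{e^{2\pi i n\cdot x}\}_{n\in\Z^d}$ is an orthonormal basis of $L^2(\Q_d)$, Parseval gives $\int_{\Q_d}|f|^2=\sum_n|c_n|^2$, so the Riesz-sequence condition becomes exactly
\[
A\int_{\Q_d}|f|^2\le\int_{\Q_d}\Phi\,|f|^2\le B\int_{\Q_d}|f|^2
\qquad\text{for every trigonometric polynomial } f .
\]

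With this reformulation the "if" direction is immediate: if $A\le\Phi\le B$ a.e.\ on $\Q_d$, the displayed inequalities hold weight by weight, so $E(\Z^d)$ is a Riesz sequence with bounds $A,B$. The substance is the converse, and for that I would first record that $\Phi\in L^1(\Q_d)$: indeed $\int_{\Q_d}\Phi=\sum_{m}|D\cap(\Q_d+m)|=|D|<\infty$, so in particular $\Phi$ is finite a.e.

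For the converse the main obstacle is that $\Phi$ is a priori only integrable, so I cannot simply insert an arbitrary $L^2$ function into the quadratic form and must pass from the dense class of trigonometric polynomials to indicator test functions with care. To obtain the upper bound I would argue as follows: given $g\in L^2(\Q_d)$, choose trigonometric polynomials $f_k\to g$ in $L^2$ and, passing to a subsequence, a.e.; Fatou's lemma applied to $\Phi|f_k|^2\ge 0$ together with $\int_{\Q_d}|f_k|^2\to\int_{\Q_d}|g|^2$ yields $\int_{\Q_d}\Phi|g|^2\le B\int_{\Q_d}|g|^2$ for all $g\in L^2(\Q_d)$. Testing this with $g=\chi_E$, where $E=\{x\in\Q_d:\Phi(x)>B\}$, forces $\int_E\Phi\le B|E|$, which is impossible unless $|E|=0$; hence $\Phi\le B$ a.e. Once $\Phi$ is known to be bounded, the form $g\mapsto\int_{\Q_d}\Phi|g|^2$ is continuous on $L^2(\Q_d)$, so the lower inequality extends from trigonometric polynomials to all of $L^2(\Q_d)$; testing with $g=\chi_{E'}$ for $E'=\{x:\Phi(x)<A-\varepsilon\}$ and letting $\varepsilon\downarrow0$ gives $\Phi\ge A$ a.e., completing the equivalence. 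I expect the folding identity to be routine and the delicate point to be precisely this density/Fatou passage, which converts the form inequalities—valid only on trigonometric polynomials—into the pointwise essential bounds on $\Phi$.
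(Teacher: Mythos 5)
Your proof is correct, and it takes a genuinely different---and more economical---route than the paper's. The paper proves sufficiency by passing through shift-invariant space theory: it writes $\Phi(x)=\sum_{m}|\chi_D(x+m)|^2$, invokes Theorem \ref{T-basis2} (with $\varphi=\hat\chi_D$) to get two-sided bounds on $\|T_D{\bf a}\|_2$ for the operator $T_D$ of \eqref{e-TD}, and then converts those into bounds on the quadratic form $\langle T_D{\bf a},{\bf a}\rangle_2=\|S({\bf a})\|_{L^2(D)}^2$ via the positive-operator inequality of Lemma \ref{L-Haase}; necessity is handled by a separate Plancherel/Poisson computation ending in the identity \eqref{new e}, plus a contradiction argument to exclude $\Phi=0$ on a set of positive measure. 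Your single folding identity $\|f\|_{L^2(D)}^2=\int_{\Q_d}\Phi\,|f|^2$ replaces all of this machinery and serves both directions at once. Two points of comparison are worth recording. First, your identity is the correct form of the paper's key identity: as printed, \eqref{new e} carries the weight $|\Phi(x)|^2$ rather than $\Phi(x)$, because its first equality asserts $\|S({\bf a})\|_{L^2(D)}^2=\|T_D{\bf a}\|_2^2$, i.e. $\langle T_D{\bf a},{\bf a}\rangle_2=\langle T_D^2{\bf a},{\bf a}\rangle_2$, which holds only when $\Phi\in\{0,1\}$ a.e. (for $D=[0,2]\subset\R$ and ${\bf a}$ the sequence with $a_0=1$ and all other entries zero, the two sides are $2$ and $4$). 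Second, in the necessity direction the paper deduces from the finiteness of that integral only that $\Phi<\infty$ a.e., which is strictly weaker than the essential upper bound $\Phi\le B$ that the theorem asserts; your Fatou-plus-density argument, followed by testing with indicators of $\{\Phi>B\}$ and $\{\Phi<A-\varepsilon\}$, genuinely produces the two-sided essential bound. So your approach is more elementary (no shift-invariant space theorem, no operator lemma, no Poisson summation) and at the same time tightens the published argument; what the paper's route buys instead is the explicit operator $T_D$ and the connection to Riesz bases of translates, which the authors exploit elsewhere (e.g.\ in the remark on the frame constants following the proof).
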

That is, we prove that $E(\Z^d)$ is a Riesz sequence in $L^2(D)$ if and only if   the  integer translates of $D$  (i.e., the sets   
    $D+n=\{x+n,\, x\in D\}$, with $n\in\Z^d$) cover  $\R^d $  with the possible exception of  a set of measure zero.  
     
     \medskip                            
It is interesting to compare  Theorem \ref{T-Riesz }  with results in \cite{AAC, K, GL}. 
In these papers the authors consider  domains  that {\it multi-tile } $\R^d$, i.e.  bounded measurable sets  $S \subset\R^d$   for which   there exist a set  of translations $\Lambda $  and an  integer $h>0$    such that  $\sum_{\lambda\in\Lambda} \chi_{S+\lambda}(x)\equiv h$   a.e.; if $h=1$,  we  say that $S$  {\it  tiles} $\R^d$.
It is proved in   \cite[Theorem 1]{GL} and in  \cite[Theorem 1]{K} that bounded domains  that   multi-tile  $\R^d$ with a lattice of translation  have an exponential basis; in the recent \cite[Theorem 4.4] {AAC} the converse of \cite[Theorem 1]{K} is proved.  

If $\Phi$ is   as in \eqref{e-Phi}  and     $\Phi(x)\equiv k$ a.e., then   $D$    multi-tiles $\R^d$ with lattice of translations $\Z^d$. By Theorem \ref{T-Riesz },   $E(\Z^d)$ is a Riesz sequence on $L^2(D)$; when $D$ is bounded,   it is shown in  \cite[Theorem 1]{K}   that   $E(\Z^d)$ can be completed to an exponential  basis for $L^2(D)$, but when $D$ is not bounded an example in  \cite{AAC} shows that that may not be possible.

 \medskip

Next,  we investigate conditions for which  $E(\Z^d)$ is  a frame on   $D $.  
The following   result is  proved in  \cite[Lemma 2.10]{GLi}. See also    the recent \cite[Theorem 2]{BHM}.
   \begin{theorem} \label{T-frame}
    $E(\Z^d)$ is a frame on $L^2( D)$ if and only if   for every   $ m,   s\in\Z^d$, with  $  m\ne   s$, 
       we have that  \begin{equation}\label{e-ass-fr}
       |(D+  m)\cap (D+ s)|=0.
       \end{equation}
\end{theorem}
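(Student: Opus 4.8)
The plan is to translate both frame inequalities into statements about the \emph{periodization} of $f$. For $f\in L^2(D)$, extended by zero to $\R^d$, the finiteness of $|D|$ gives $f\in L^1(\R^d)$, so the numbers defining the frame are exactly Fourier coefficients: $\langle f, e^{2\pi i n\cdot x}\rangle_{L^2(D)}=\widehat f(n)=\widehat{Pf}(n)$ for every $n\in\Z^d$, where $Pf(x):=\sum_{m\in\Z^d}f(x+m)$. By Parseval's identity for Fourier series on $\Q_d$, read as an equality in $[0,\infty]$, this gives
\[
\sum_{n\in\Z^d}\bigl|\langle f, e^{2\pi i n\cdot x}\rangle_{L^2(D)}\bigr|^2=\int_{\Q_d}\Bigl|\sum_{m\in\Z^d}f(x+m)\Bigr|^2\,dx,
\]
while unfolding the norm yields $\|f\|_{L^2(D)}^2=\int_{\Q_d}\sum_{m\in\Z^d}|f(x+m)|^2\,dx$. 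Thus $E(\Z^d)$ is a frame precisely when there are constants $0<A\le B<\infty$ with
\[
A\int_{\Q_d}\sum_m|f(x+m)|^2\,dx\le\int_{\Q_d}\Bigl|\sum_m f(x+m)\Bigr|^2\,dx\le B\int_{\Q_d}\sum_m|f(x+m)|^2\,dx
\]
for all $f\in L^2(D)$. I would also record at the outset that the geometric hypothesis \eqref{e-ass-fr} is equivalent to $\Phi(x)\le 1$ for a.e.\ $x\in\Q_d$, since $\{x:\Phi(x)\ge2\}$ has positive measure exactly when some pair $D+m$, $D+s$ overlaps in positive measure.

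For the sufficiency, assume $\Phi\le1$ a.e. Then for a.e.\ $x\in\Q_d$ the sum $\sum_m f(x+m)$ has at most one nonzero term, so pointwise $\bigl|\sum_m f(x+m)\bigr|^2=\sum_m|f(x+m)|^2$. Integrating, the two quadratic expressions above coincide, so the inequalities hold with $A=B=1$; in fact $E(\Z^d)$ is a Parseval frame.

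For the necessity I would argue by contraposition, producing a nonzero $f\in L^2(D)$ whose periodization vanishes identically, which forces the left-hand sum to be zero while $\|f\|>0$ and so destroys the lower bound. Assuming \eqref{e-ass-fr} fails, the set $U=\{x\in\Q_d:\Phi(x)\ge2\}$ has positive measure. Enumerate $\Z^d=\{m^{(1)},m^{(2)},\dots\}$ and let $i(x)<j(x)$ be the indices of the two smallest lattice vectors with $x+m^{(i)},x+m^{(j)}\in D$; the level sets $U_{i,j}$ partition $U$ into countably many measurable pieces, at least one of which, say $V$, has positive measure. On $V$ the sets $V+m^{(i)}$ and $V+m^{(j)}$ are disjoint subsets of $D$ (distinct integer translates of a subset of the fundamental domain $\Q_d$), so $f:=\chi_{V+m^{(i)}}-\chi_{V+m^{(j)}}$ lies in $L^2(D)$ with $\|f\|_{L^2(D)}^2=2|V|>0$, while a direct check gives $Pf\equiv0$ and hence $\sum_n|\widehat f(n)|^2=0$.

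The main obstacle is exactly this measurable selection step: one must choose, measurably over a positive-measure base in $\Q_d$, two overlapping sheets of $D$ so that the resulting cancellation function is genuinely in $L^2(D)$ and its periodization is honestly zero. The fixed ordering of $\Z^d$ and the essential disjointness of distinct integer translates of a subset of $\Q_d$ are what make this rigorous. The remaining points are routine but worth stating: the interchange of sum and integral in $\widehat{Pf}(n)=\widehat f(n)$ is justified by $f\in L^1(\R^d)$, and Parseval should be invoked as an identity in $[0,\infty]$ so that the argument never tacitly assumes $Pf\in L^2(\Q_d)$ before establishing it.
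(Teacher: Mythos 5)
Your proof is correct, and it reaches the same destination by a noticeably cleaner route than the paper's. The paper proves the theorem through Lemma \ref{L-frame2}: it decomposes $D$ into pieces $D_0, D_1,\dots$ that translate into $\Q_d$, expands a two-piece function $f=f_0+f_1$ against the orthonormal basis of $L^2(\Q_d)$ via \eqref{e-Planch} to get the identity \eqref{e-id1}, and then in the necessity direction exhibits the cancellation function $f_1(x)=\chi_B(x+v)$, $f_0=-\chi_B$ with $B=(D-D_1)\cap(D_1+v)$, treating only a representative overlap configuration ("the proof is similar in the other cases"). Your argument packages the same underlying mechanism --- folding into the cube plus Parseval --- into a single reformulation: $E(\Z^d)$ is a frame iff $A\int_{\Q_d}P(|f|^2)\le\int_{\Q_d}|Pf|^2\le B\int_{\Q_d}P(|f|^2)$, where $P$ is the periodization operator. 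This buys three things. First, sufficiency becomes a one-line pointwise identity ($\Phi\le1$ forces at most one nonzero sheet, so $|Pf|^2=P(|f|^2)$ a.e.), recovering the paper's conclusion that the frame is automatically Parseval. Second, your measurable-selection step (enumerating $\Z^d$, partitioning $U=\{\Phi\ge2\}$ into the level sets $U_{i,j}$, and extracting a positive-measure $V$) handles the general overlap structure uniformly, making rigorous exactly the case analysis the paper waves off; your cancellation function $\chi_{V+m^{(i)}}-\chi_{V+m^{(j)}}$ is the same object as the paper's $\chi_B(\cdot+v)-\chi_B$, just constructed without the preliminary decomposition of $D$. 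Third, invoking Parseval as an identity in $[0,\infty]$ (justified by the uniqueness theorem for Fourier coefficients of $L^1(\Q_d)$ functions) avoids any tacit assumption that $Pf\in L^2(\Q_d)$ in the upper bound. The one thing the paper's more hands-on computation yields as a byproduct is Corollary \ref{C-complete}: since its counterexample function is orthogonal to every exponential, completeness also fails; your $f$ with $Pf\equiv0$ gives this equally well, so nothing is lost.
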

In other words,   
  $E(\Z^d)$ is a frame in $L^2(D)$ if and only if the  integer translates of $D$ only overlap  on sets of measure zero. Equivalently, $E(\Z^d)$ is a frame on $L^2( D)$ if and only if $ \Phi(x) \leq 1$ for a.e. $x\in\R^d$.  

\medskip
We finally prove the following 

\begin{theorem}\label{T-basis} Assume that $|D|=1$. The following are equivalent
in $L^2(D)$:
\begin{itemize}\item [a)]
$E(\Z^d)$ is a frame 
\item [b)] $E(\Z^d)$ is a complete 
\item [c)] The integer translates of  $D$  tile   $\R^d$.
\item[d)] $E(\Z^d)$ is an orthonormal Riesz basis    
\item[e)] $E(\Z^d)$ is a Riesz sequence 

\end{itemize}
\end{theorem}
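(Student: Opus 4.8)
The plan is to collapse all five conditions to the single statement that $\Phi(x)=1$ for a.e. $x\in\Q_d$, which is exactly the tiling condition c). The engine of the proof is the measure identity
\[
\int_{\Q_d}\Phi(x)\,dx=\sum_{m\in\Z^d}\int_{\Q_d}\chi_D(x+m)\,dx=\int_{\R^d}\chi_D(x)\,dx=|D|=1,
\]
combined with the fact, noted after \eqref{e-Phi}, that $\Phi$ takes values in the non-negative integers. Together these yield the decisive \emph{squeezing principle}: since $\int_{\Q_d}(1-\Phi)=0$, a one-sided a.e. bound $\Phi\le 1$ \emph{or} $\Phi\ge 1$ already forces $\Phi\equiv 1$ a.e. I will record this first and use it repeatedly.

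With the squeezing principle in hand, the equivalences involving a), c), e) follow from the earlier theorems. By Theorem \ref{T-frame}, a) holds iff $\Phi\le 1$ a.e., whence $\Phi\equiv 1$, i.e.\ c). By Theorem \ref{T-Riesz }, e) holds iff $A\le\Phi\le B$ a.e.\ for some $0<A\le B$; since $\Phi$ is integer-valued the lower bound is equivalent to $\Phi\ge 1$ a.e., which by squeezing gives $\Phi\equiv 1$, i.e.\ c) (the upper bound being automatic once $\Phi\equiv 1$). Thus a) $\Leftrightarrow$ c) $\Leftrightarrow$ e).

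For c) $\Rightarrow$ d) I compute the Gram matrix directly: using $e^{2\pi i k\cdot m}=1$ for $k,m\in\Z^d$ and periodizing,
\[
\langle e^{2\pi i n\cdot x},e^{2\pi i s\cdot x}\rangle_{L^2(D)}=\int_{\Q_d}\Phi(x)\,e^{2\pi i(n-s)\cdot x}\,dx=\int_{\Q_d}e^{2\pi i(n-s)\cdot x}\,dx=\delta_{n,s},
\]
so under tiling the system is orthonormal. Combined with the frame property a) (already equivalent to c)), it is a complete orthonormal system, hence an orthonormal (Riesz) basis, giving d). The reverse implications d) $\Rightarrow$ a) and d) $\Rightarrow$ b) are the standard facts that an orthonormal basis is both a frame and complete (a frame is complete because $f\perp E(\Z^d)$ forces $A\|f\|^2\le 0$).

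It remains to close the loop through b), and this is where the only real work lies: I must show b) $\Rightarrow$ c), i.e.\ that completeness forces $\Phi\le 1$ a.e.\ (and then squeezing gives $\Phi\equiv 1$). I argue contrapositively. Extending $f\in L^2(D)$ by zero and periodizing, the numbers $\langle f,e^{2\pi i n\cdot x}\rangle$ are precisely the Fourier coefficients of $\sum_{m}(f\chi_D)(\cdot+m)$, so $f\perp E(\Z^d)$ iff this periodization vanishes a.e. If $U=\{x\in\Q_d:\Phi(x)\ge 2\}$ has positive measure, then over each $x\in U$ the fiber $\{x+m:m\in\Z^d\}\cap D$ has at least two points, on which the single constraint ``periodization $=0$'' admits a nontrivial solution. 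The obstacle is purely to assemble these fiberwise solutions into one measurable, nonzero $L^2(D)$ function; I plan to fix an enumeration of $\Z^d$, let $m_1(x)<m_2(x)$ be the first two indices with $x+m\in D$ (measurable since each $\{x:x+m\in D\}$ is measurable), and set $f$ equal to $+1$ at $x+m_1(x)$, $-1$ at $x+m_2(x)$, and $0$ elsewhere. This $f$ is bounded, supported on $D$ of finite measure, nonzero, and has vanishing periodization, contradicting completeness. Hence b) implies $\Phi\le 1$ a.e., i.e.\ c), completing the cycle a) $\Leftrightarrow$ b) $\Leftrightarrow$ c) $\Leftrightarrow$ d) $\Leftrightarrow$ e).
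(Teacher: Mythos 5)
Your proof is correct, and it takes a route that differs from the paper's in two substantive ways. First, where the paper closes the gap between ``covering'' (from Theorem \ref{T-Riesz }) or ``packing'' (from Theorem \ref{T-frame}) and ``tiling'' by decomposing $D$ into pieces $D_j$ with $D_j+v_j\subset\Q_d$ and invoking the measure identity \eqref{e-Q-D}, you run both directions through the single identity $\int_{\Q_d}\Phi=|D|=1$ and the resulting squeezing principle; this is the same measure-counting idea, but your formulation is more unified and handles a), e) symmetrically, whereas the paper treats e) $\Rightarrow$ c) as a separate argument. Second, and more significantly, the paper obtains c) $\iff$ d) by citing Fuglede's theorem, while you prove the needed implication c) $\Rightarrow$ d) directly: the periodized Gram computation $\langle e^{2\pi i n\cdot x},e^{2\pi i s\cdot x}\rangle_{L^2(D)}=\int_{\Q_d}\Phi(x)e^{2\pi i(n-s)\cdot x}\,dx=\delta_{n,s}$ under tiling, combined with the already-established frame property, yields an orthonormal basis with no external input; your proof is therefore self-contained where the paper's is not (indeed you have effectively reproved the relevant special case of Fuglede's theorem). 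Your treatment of b) $\Rightarrow$ c) also rebuilds from scratch what the paper gets from Corollary \ref{C-complete}: your fiberwise $\pm 1$ construction on $U=\{\Phi\ge 2\}$ is a measurable-selection generalization of the paper's own witness $f=\chi_B(\cdot+v)-\chi_B$ from the proof of Lemma \ref{L-frame2}, and the measurability argument you sketch (partitioning $U$ by the values of $(m_1,m_2)$ into countably many measurable sets and summing signed indicators of their translates) does go through, since a.e.\ point of $\R^d$ has a unique representation $x+m$ with $x\in\Q_d$, $m\in\Z^d$. The trade-off: the paper's proof is shorter because it leans on Corollary \ref{C-complete} and Fuglede; yours is longer but elementary throughout and makes the role of $\Phi$ as the single governing object completely explicit.
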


We recall that a set   $\{w_i\}_{i\in I}$ is {\it complete} in a Hilbert space $ (H ,\  \l \ , \ \r_H)$ if and only if  $\l u, w_i\r_H=0$ for every $i\in I$  implies $u=0$.  A frame is complete but   the converse is not necessarily true.

B. Fuglede proved in \cite{F} that  if $\Lambda=A\Z^d$ is a lattice in $\R^d$,  the set $E(A\Z^d)$  is  an orthogonal exponential basis in $L^2(D)$  if and only if   $\{D +\mu\}_{\mu\in( A^t)^{-1} \Z^d }$ tiles $\R^d$.  Here, $(A^t)^{-1}$  denotes the inverse of the transpose of  $A$.
Thus,  the equivalence of  c) and d) in   Theorem \ref{T-basis} is  a special case of Fuglede's  theorem. 
The connections between tiling  and   exponential bases are deep and interesting and  have been intensely investigated. We refer the reader to  the introduction and to the references cited in    \cite{BHM}. See also \cite{K2}.

\medskip
 This paper is organized as follows: 
 in Section 2 we present  preliminary definitions and known results. We prove Theorems    \ref{T-Riesz }, \ref{T-frame} and \ref{T-basis} in Sections 3 and 4. We solve Problems 1, 2 and 3 in Sections  5, 6 and 7.
 
 \medskip \noindent
 {\it Acknowledgments.} The first author wishes to thank C.K. Lai for  bringing Problem 3 to our attention and E. Hernandez for  stimulating discussions that helped us improve the quality of this paper. We also  wish thank the anonymous referees for their thorough reading of our manuscript and for their valuable  suggestions.

\section{ Preliminary and notation}

  We denote with $x\cdot y= x_1y_1+\,...\,+ x_dy_d$  the   inner product of    $x=(x_1, ...,\, x_d)$, $y=(y_1, ...,y_d)\in\R^d$.

  We let  $\|f\|_2=\left(\int_{\R^d} |f(x)|^2 dx\right)^{\frac 12}$   be the  standard norm in  $L^2(\R^d)$; we let ${\bf c}=\{c_j\}_{j\in\Z^d}$ and we denote with   $\|{\bf c}\|_{\ell^2}=(\sum_{j\in\Z^d } |c_j|^2)^{\frac 12}$  the  standard norm in $\ell^2(\Z^d)$.   We denote with $\l f ,\, g \r_{2}=\int_{\R^d} f(x)\bar g(x)dx$   the  inner product   in $L^2(\R^d)$. When there is no ambiguity we will use the same notation also for the inner product in $\ell^2(\Z^d)$.

The Fourier transform of  a function $f\in L^2(\R^d)\cap L^1(\R^d)$ is $\hat f(x)=\int_{\R^d} f(t) e^{-2\pi i x\cdot t} dt. $

 We will  often say that a family of sets $\{D_\lambda\}_{\lambda\in\Lambda}$ {  covers}   $\R^d$  with the understanding that  $\R^d-\cup_{\lambda\in\Lambda} D_\lambda $ may be a  nonempty set of measure zero.  
 
We use the notation   $\tau_w$ to denote the translation operator $g \to  g  (\cdot+w)$.
 
\subsection{Frames and Riesz bases}
 
 We have used the excellent textbooks \cite{Heil} and \cite{Cr} for most of the definitions  and preliminary results presented in this section. 
 
 Let $H$ be a  separable Hilbert space  with inner product $\langle\ ,\ \rangle $  and norm $||\ ||=\sqrt{\l \ , \    \r} $. 
A sequence of vectors ${\mathcal V}= \{v_j\}_{j\in\Z} \subset H $   is a
 {\it frame} if
there exist  constants $0< A, \ B<\infty$    such that the following inequality holds  for every $w\in H$.
\begin{equation}\label{e2-frame}
 A||w||^2\leq  \sum_{j\in\Z} |\l  w, v_j\r |^2\leq B ||w||^2.
\end{equation}

  We say that  ${\cal V}$  is a   {\it  tight frame } if $A=B$   and   is a {\it Parseval frame} if $A=B=1$. 


The left inequality in \eqref{e2-frame} implies that  ${\cal V}$ is     complete    in $H$  but it may not be linearly independent. A {\it Riesz basis} is a linearly independent frame.

An equivalent definition of  Riesz basis is the following: the set 
${\mathcal V}$ is a  {\it Riesz sequence}  if   there exists constants $0<A\leq B <\infty$ such that, for every finite set of coefficients $ \{a_j\}_{j\in J}\subset\C $,  
we have that
\begin{equation}\label{e2- Riesz-sequence}
 A  \sum_{j\in J}   |a_j|^2   \leq  \left\Vert \sum_{j\in J}  a_j  v_j \right\Vert^2  \leq B \sum_{j\in J} |a_j|^2, 
\end{equation}
and it is  {\it Riesz basis} if it also satisfies  \eqref{e2-frame}.  
If ${\mathcal V}$ is a Riesz basis, the  constants $A$ and $B$ in \eqref{e2-frame} and \eqref{e2- Riesz-sequence}  are the same (see \cite[Proposition  3.5.5]{Cr}).

 An orthonormal  basis   is a Riesz basis; we can write   $w=\sum_{j\in\Z} \l v_j,  \, w\r v_j$ for every $v\in H$ and   this  representation formula  yields  the following important identities: for every $    w,\ z\in  H$, 
\begin{equation}\label{e-Planch}
||w||^2= \sum_{n\in\Z } |\l v_n,\, w\r|^2, \quad \l w, z\r=   \sum_{n\in\Z }  \l v_n,\, w\r\overline{\l v_n, z\r}. 
\end{equation}
The following useful  proposition can be found in  \cite[Prop. 3.2.8]{Cr}.
 \begin{proposition}\label{prop-C}  A sequence of unit vectors in $H$  is a Parseval frame if
and only if it is an orthonormal Riesz basis.
\end{proposition}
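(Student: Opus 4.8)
The plan is to deduce everything from the Parseval identity, which for a Parseval frame $\{v_j\}$ (the case $A=B=1$ in \eqref{e2-frame}) reads $\|w\|^2 = \sum_j |\l w, v_j\r|^2$ for every $w\in H$. The reverse implication is immediate and I would dispatch it first: if $\{v_j\}$ is an orthonormal basis, then each $\|v_j\|=1$, so the vectors are unit vectors, and the displayed identity is precisely Parseval's equality for an orthonormal basis; hence $\{v_j\}$ is a Parseval frame consisting of unit vectors.

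For the forward implication, suppose $\{v_j\}$ is a Parseval frame with $\|v_j\|=1$ for every $j$. The key step, and essentially the only step, is to test the Parseval identity on the frame vectors themselves. Fixing an index $k$ and setting $w=v_k$ gives $1=\|v_k\|^2=\sum_j |\l v_k, v_j\r|^2$. Isolating the diagonal summand $j=k$, which equals $|\l v_k, v_k\r|^2=\|v_k\|^4=1$, forces $\sum_{j\ne k} |\l v_k, v_j\r|^2 = 0$, and therefore $\l v_k, v_j\r = 0$ for all $j\ne k$. Since $k$ was arbitrary, the family $\{v_j\}$ is an orthonormal system.

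It then remains to upgrade this orthonormal system to an orthonormal (Riesz) basis, i.e.\ to verify completeness. Here I would invoke the lower frame bound: as observed right after \eqref{e2-frame}, the left inequality of the frame condition (with $A=1>0$) already guarantees that $\{v_j\}$ is complete in $H$. A complete orthonormal system is an orthonormal basis, and an orthonormal basis is in particular a Riesz basis, which finishes the proof.

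I do not anticipate a genuine obstacle, as the proposition reduces to evaluating the Parseval identity at $w=v_k$. The only point requiring a little care is the bookkeeping that separates the diagonal term from the off-diagonal terms, together with the observation that the unit-norm hypothesis is exactly what makes the diagonal term saturate the identity, leaving no room for the cross terms; without the hypothesis $\|v_j\|=1$ one only recovers $\sum_j|\l v_k,v_j\r|^2=\|v_k\|^2$, which need not force orthogonality.
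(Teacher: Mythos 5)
Your proof is correct. The paper does not actually prove Proposition \ref{prop-C} --- it quotes it from Christensen's book (\cite[Prop. 3.2.8]{Cr}) --- and your argument is precisely the standard one given there: evaluate the Parseval identity at $w=v_k$, use the unit-norm hypothesis to make the diagonal term $\|v_k\|^4=1$ saturate the identity and so annihilate the off-diagonal terms $\sum_{j\ne k}|\langle v_k, v_j\rangle|^2$, and then obtain completeness from the lower frame bound, so that the orthonormal system is an orthonormal (hence Riesz) basis.
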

%
%

\medskip
Let $D\subset \R^d$ be a measurable set, with $|D|<\infty$.
An {\it exponential basis} of $L^2(D)$ is a Riesz basis made of  functions in the form of $  e^{2\pi i   x\cdot \lambda  }$, where    $ \lambda \in\R^d$.
Exponential bases are important in the applications because they allow one to represent   functions in $L^2(D)$ in a stable manner, with coefficients that are easy to calculate.  
%

The following lemma is easy to prove   (see e.g   \cite[Prop. 2.1]{DK}).
\begin{lemma}\label{L-dil-basis} Let $D_1\subset D\subset D_2$ be measurable sets of $\R^d$ , with $|D_2|<\infty$. 
Let  ${\cal V}=\{ e^{2\pi i   x\cdot \lambda_n  }\}_{n\in\Z}$  be  Riesz basis of $L^2(D)$  with frame constants $0<A\leq B<\infty$; then, ${\cal V}$ is a Riesz sequence on $L^2(D_2)$ and a frame on  $L^2(D_1)$
 with the same frame constants. 
\end{lemma}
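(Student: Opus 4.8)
The plan is to prove the two assertions separately, in each case reducing the desired inequality to the corresponding inequality for $\mathcal V$ on $D$ by comparing a single function over the nested domains $D_1\subset D\subset D_2$. The only elementary tools involved are the monotonicity of the integral over nested sets and the observation that extending a function by zero changes neither its $L^2$ norm nor its inner products against the $v_n$. For the frame assertion, given $w\in L^2(D_1)$ I would extend it by zero to $\tilde w\in L^2(D)$ supported in $D_1$. Since $\tilde w$ vanishes on $D\setminus D_1$, both the coefficients $\langle w, v_n\rangle_{L^2(D_1)}=\langle \tilde w, v_n\rangle_{L^2(D)}$ and the norm $\|w\|_{L^2(D_1)}=\|\tilde w\|_{L^2(D)}$ are unchanged, so feeding $\tilde w$ into the frame inequality \eqref{e2-frame} for $\mathcal V$ on $D$ yields \eqref{e2-frame} for $w$ on $D_1$ \emph{with the same constants} $A,B$. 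This is the easy half.

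For the lower Riesz bound on $D_2$, I would fix a finite linear combination $f=\sum_{j\in J}a_j v_j$ and view it as a single trigonometric sum defined on all of $\R^d$, whose restrictions to $D$ and to $D_2$ produce the two $L^2$ norms in question. Monotonicity of the integral over $D\subset D_2$ then gives
\[
\Big\|\sum_{j\in J}a_j v_j\Big\|_{L^2(D_2)}^2=\int_{D_2}|f|^2\ \ge\ \int_{D}|f|^2=\Big\|\sum_{j\in J}a_j v_j\Big\|_{L^2(D)}^2\ \ge\ A\sum_{j\in J}|a_j|^2,
\]
which is the left inequality of \eqref{e2- Riesz-sequence} with the same lower constant $A$.

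The upper bound is where the real work lies, and I expect it to be the main obstacle. Writing $\int_{D_2}|f|^2=\int_{D}|f|^2+\int_{D_2\setminus D}|f|^2$, the first term is at most $B\sum_j|a_j|^2$ by the Riesz upper bound on $D$, but the excess $\int_{D_2\setminus D}|f|^2$ is \emph{not} controlled by monotonicity, and the resulting upper constant will in general exceed $B$. My plan is to control this excess by a Bessel-type estimate for the exponentials $\{e^{2\pi i x\cdot\lambda_n}\}$ over the finite-measure set $D_2\setminus D$: the upper frame bound $B$ on $D$ forces the frequency set $\{\lambda_n\}$ to be relatively separated (a finite union of uniformly discrete sets), and relatively separated exponentials form a Bessel sequence on any bounded set, which gives $\int_{D_2\setminus D}|f|^2\le C\sum_j|a_j|^2$ for a finite $C$ depending on $\{\lambda_n\}$ and $D_2$. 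Combined with the lower bound, this exhibits $\mathcal V$ as a Riesz sequence on $L^2(D_2)$. I expect this Bessel estimate to be the only nontrivial ingredient, and it is also the step where boundedness of $D_2$ is genuinely used rather than merely $|D_2|<\infty$, since the mass of a sharply peaked trigonometric sum can otherwise leak out along an unbounded tail of finite measure.
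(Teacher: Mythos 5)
The paper itself gives no proof of Lemma~\ref{L-dil-basis}; it defers to \cite[Prop.~2.1]{DK}. Your two easy halves coincide with the intended standard argument and are correct: extension by zero transfers \eqref{e2-frame} from $L^2(D)$ to $L^2(D_1)$ with the same constants $A,B$, and monotonicity of the integral over $D\subset D_2$ gives the lower bound in \eqref{e2- Riesz-sequence} on $L^2(D_2)$ with the same $A$.

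Your reservations about the upper Riesz bound are not a defect of your proof; they are accurate, and they show that the lemma as printed overclaims. First, the upper constant cannot be $B$ in general: for $D=[-\frac12,\frac12]$, $\lambda_n=n$ (so $A=B=1$) and $D_2=[-1,1]$, periodicity gives $\|\sum_n a_ne^{2\pi i nx}\|_{L^2(D_2)}^2=2\sum_n|a_n|^2$, so the optimal upper constant is $2$, not $1$ (consistent with the Remark in Section~3, which says the Riesz constants of $E(\Z^d)$ bracket the measure of the domain). Second, your warning that mere finiteness of $|D_2|$ is insufficient is justified: take $D=[0,1]$, $\lambda_n=n$, and $D_2=[0,1]\cup\bigcup_{k\ge1}[k,\,k+2^{-k}]$, so that $|D_2|=2<\infty$. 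The function $\Phi$ of \eqref{e-Phi} equals $1+j$ on $(2^{-j-1},2^{-j})$ and is therefore unbounded, so by the paper's own Theorem~\ref{T-Riesz } the system $E(\Z)$ is \emph{not} a Riesz sequence in $L^2(D_2)$; concretely, the normalized Dirichlet kernels $N^{-1/2}\sum_{n=0}^{N-1}e^{2\pi i nx}$ have $\|\cdot\|_{L^2(D_2)}^2\gtrsim \log N$ while $\|{\bf a}\|_{\ell^2}=1$. So the true statement is exactly the one your plan proves: for \emph{bounded} $D_2$, the system is a Riesz sequence with lower constant $A$ and some finite upper constant (in general larger than $B$), obtained as you propose from the separation of $\{\lambda_n\}$ forced by the Riesz basis property on $D$, together with the Bessel property of separated exponentials on bounded sets. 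That corrected version suffices for every application in the paper, since the broken interval, the rotated square, the parallelepiped, and the sets $D\subset\Q_d$ in Section~4 are all bounded.
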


 \subsection{The Beurling density} 
In \cite{B, B1} A. Beurling   characterized sampling sets by means of their density.
 
For $h > 0$ and $x  \in\R^d$,  we let $Q _h(x)$ denote the  closed cube centered at $x$ with
side length $h$.
Let $\Lambda=\{\lambda_j\}_{j\in\Z}\subset  \R^d$ be    {\it uniformly discrete}, i.e., we assume that
$|\lambda_j-\lambda_k|\ge \delta>0$
  whenever $\lambda_j\ne\lambda_k$.  Following \cite{CDH} we denote with
\begin{align*} {\mathcal D}^+(\Lambda) &= \limsup_{h\to \infty} \frac{\sup_{x\in\R^d} |\Lambda\cap Q_h(x)|}{h^d}\\ 
{\mathcal D}^-(\Lambda) &= \liminf_{h\to \infty} \frac{\inf_{x\in\R^d} |\Lambda\cap Q_h(x)|}{h^d}
\end{align*}
 the upper and lower  density of $\Lambda$.  
  If ${\mathcal D}^-(\Lambda)={\mathcal D}^+(\Lambda)$  we say that $\Lambda$ has {\it uniform Beurling density} ${\mathcal D}(\Lambda)$.
 
Theorem \ref{T-density-exp} below is a generalization of  theorems of Landau  and Beurling \cite{B, Landau} in dimension $d\ge 1$. See also \cite{NO} and  \cite[Sect. 2]{S}. 
 
\begin{theorem}\label{T-density-exp}
 
If $E(\Lambda)=\{e^{2\pi i \lambda_j\cdot x}\}_{j\in\Z}$ is a frame in $L^2(D)$, then  ${\mathcal D}^-(\Lambda)\ge |D|$. 
If  $E(\Lambda)$ is a Riesz sequence in $L^2(D)$, then ${\mathcal D}^+(\Lambda)\leq |D|$.
\end{theorem}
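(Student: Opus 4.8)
The plan is to recast the statement about exponential systems in the classical language of sampling and interpolation in a Paley--Wiener space, and then to invoke the $d$-dimensional version of Landau's density theorem. First I would set up the unitary correspondence furnished by the Fourier transform. Since $|D|<\infty$ we have $L^2(D)\subseteq L^1(D)$, so $\hat f$ is continuous and its pointwise values are well defined; by Plancherel the map $\mathcal F\colon f\mapsto \hat f$ is a unitary isomorphism of $L^2(D)$ onto the Paley--Wiener space $PW_D:=\{\hat f : f\in L^2(\R^d),\ \supp f\subseteq D\}$. For $f\in L^2(D)$ one has the identity $\langle f, e^{2\pi i\lambda_j\cdot x}\rangle_{2}=\hat f(\lambda_j)$, so, writing $g=\hat f$, the frame inequalities \eqref{e2-frame} for $E(\Lambda)$ read exactly as the stable sampling inequalities $A\|g\|_2^2\le \sum_j |g(\lambda_j)|^2\le B\|g\|_2^2$ valid for all $g\in PW_D$, while the Riesz-sequence inequalities \eqref{e2- Riesz-sequence} become the stable interpolation inequalities for $PW_D$.

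With this dictionary in place, the two assertions are precisely Landau's necessary density conditions: a set of stable sampling for $PW_D$ must have lower density at least $|D|$, and a set of stable interpolation must have upper density at most $|D|$. I would therefore quote the density theorem in the form established, for $d\ge 1$, in \cite{Landau, NO, S} and apply it directly. The one standing hypothesis these results require is that $\Lambda$ be uniformly discrete, which is exactly the separation condition already built into the definition of the densities ${\mathcal D}^{\pm}(\Lambda)$; I note in passing that for a Riesz sequence uniform discreteness is in fact automatic from the lower bound in \eqref{e2- Riesz-sequence}.

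I do not expect to reprove Landau's theorem from scratch, and that is the genuinely hard part. For orientation I would recall its mechanism: fix a large cube $Q_h(x)$ and compare the number of nodes $|\Lambda\cap Q_h(x)|$ with the number of degrees of freedom of $PW_D$ localized to $Q_h(x)$, the latter being governed by the concentration operator $P_{Q_h(x)}P_D$, whose trace equals $|D|\,h^d$ and whose eigenvalues cluster near $0$ and $1$ with a plunge region that is of lower order in $h$. A stable sampling set must supply at least as many nodes as there are eigenvalues near $1$ --- otherwise a nonzero function essentially concentrated on $Q_h(x)$ would be nearly annihilated by the sampling map, violating the lower frame bound --- yielding $|\Lambda\cap Q_h(x)|\ge |D|\,h^d+o(h^d)$ and hence ${\mathcal D}^-(\Lambda)\ge |D|$; the interpolation bound ${\mathcal D}^+(\Lambda)\le |D|$ is the dual statement. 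The crux, and the step I would lean on the references for, is the eigenvalue-clustering estimate for $P_{Q_h(x)}P_D$ in arbitrary dimension.
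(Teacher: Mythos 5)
Your proposal is correct and matches the paper's treatment: the paper does not prove Theorem \ref{T-density-exp} at all, but presents it as a known generalization of the Landau--Beurling density theorems, citing \cite{B, Landau} together with \cite{NO} and \cite[Sect. 2]{S} for dimension $d\ge 1$ and for general measurable $D$ of finite measure. Your reduction via the Fourier dictionary $\langle f, e^{2\pi i\lambda_j\cdot x}\rangle_2=\hat f(\lambda_j)$, identifying frames with stable sampling and Riesz sequences with stable interpolation in $PW_D$, followed by an appeal to those same references, is exactly the intended argument, and your remarks on uniform discreteness are accurate.
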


Thus, a necessary condition for   $E(\Lambda)$ to be  a Riesz basis in $L^2(D)$  is  that ${\mathcal D} (\Lambda)= |D|$.  In the special case  of $\Lambda=\Z^d$ we have the following
\begin{corollary}\label{C-density}
If $E(\Z^d)$ is a frame in $L^2(D)$ then $|D|\leq 1$; if  $E(\Z^d)$ is a Riesz sequence   in $L^2(D)$ then $|D|\ge 1$.
\end{corollary}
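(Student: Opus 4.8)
The plan is to deduce the corollary directly from Theorem \ref{T-density-exp} by computing the uniform Beurling density of the integer lattice $\Z^d$. Once we know that ${\mathcal D}^-(\Z^d)={\mathcal D}^+(\Z^d)=1$, the two claims are immediate: if $E(\Z^d)$ is a frame in $L^2(D)$, then $|D|\le {\mathcal D}^-(\Z^d)=1$, and if $E(\Z^d)$ is a Riesz sequence in $L^2(D)$, then $|D|\ge {\mathcal D}^+(\Z^d)=1$. So the entire task reduces to the elementary lattice-point count ${\mathcal D}(\Z^d)=1$.

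First I would estimate the number of points of $\Z^d$ in a closed cube $Q_h(x)$ of side length $h$ centered at an arbitrary $x=(x_1,\dots,x_d)\in\R^d$. Since $Q_h(x)=\prod_{i=1}^d [x_i-\tfrac h2, x_i+\tfrac h2]$ is a product of $d$ closed intervals, each of length $h$, and the number of integers contained in a closed interval of length $h$ always lies between $\lfloor h\rfloor$ and $\lfloor h\rfloor+1$ regardless of the position of the interval, multiplying the per-coordinate counts yields the two-sided bound
\[
(\lfloor h\rfloor)^d \;\le\; |\Z^d\cap Q_h(x)| \;\le\; (\lfloor h\rfloor+1)^d,
\]
valid for every $x\in\R^d$. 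The essential feature is that this estimate is uniform in the center $x$.

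Next I would divide through by $h^d$ and pass to the limit. Because $(\lfloor h\rfloor/h)^d\to 1$ and $((\lfloor h\rfloor+1)/h)^d\to 1$ as $h\to\infty$, and because the bounds hold uniformly in $x$, taking the infimum and supremum over $x$ and then the liminf and limsup in $h$ in the definitions of ${\mathcal D}^\pm$ forces both quantities to equal $1$. Hence $\Z^d$ has uniform Beurling density ${\mathcal D}(\Z^d)=1$, and the corollary follows by substituting this value into Theorem \ref{T-density-exp}. There is no genuine obstacle in this argument; the only point requiring a little care is that the lattice-point bounds above be uniform in the center $x$, so that they survive the supremum and infimum appearing in the definition of the upper and lower densities.
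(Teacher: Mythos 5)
Your proof is correct and follows exactly the route the paper intends: the corollary is stated as the special case $\Lambda=\Z^d$ of Theorem \ref{T-density-exp}, which requires only the observation that $\Z^d$ has uniform Beurling density ${\mathcal D}(\Z^d)=1$. Your uniform lattice-point count $\lfloor h\rfloor^d \le |\Z^d\cap Q_h(x)| \le (\lfloor h\rfloor+1)^d$ correctly supplies the detail the paper leaves implicit.
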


\subsection{Shift invariant spaces}
We let 
$$
V^2(\varphi):=\overline{\Span \{\tau_k\varphi \}_{k\in\Z^d  } } 
$$
where  $\varphi\in L^{2}(\R^d)$   and ``bar'' denotes the closure in $L^2(\R^d)$.
The space $  V^2(\varphi)$ is {\it shift-invariant}, i.e.  if $f\in V^2(\varphi)$  then also $\tau_m f\in V^2(\varphi)$ for every $m\in\Z^d$. 
Shift-invariant spaces of functions appear naturally in signal theory and in other branches of applied sciences.
Following \cite{Aldroubi}, \cite{Christiansen}, \cite{CCS} 
 we say that  the translates $\{ \tau_k\varphi \}_{k\in\Z^d}$ form a Riesz basis in $V^2 ( \varphi ) $ if there exist constants $0<A,\ B<\infty$ such that, for every   finite set of coefficients ${\bf d}= \{d_j\}  \subset\C $, we have that 
\begin{equation}\label{E-p-basis-2}
A \|{\bf d}\|_{\ell^2} \leq \| \sum_j d_j  \tau_j\varphi  \|_{2} \leq B\|{\bf d}\|_{\ell^2}.
\end{equation}
If \eqref{E-p-basis-2}  holds,  then  $
V^2(\varphi )= \left\{ f= \sum_{k\in\Z^d}d_k  \tau_k\varphi, \ {\bf d} \in \ell^2\  \right\},
$
and the sequence $\{d_k  \}_{k\in\Z^d}$ is uniquely determined by $f$.

\medskip
The following  theorem is well known: see e.g \cite{JM} or \cite[Prop. 1.1]{AS}.  
\begin{theorem} \label{T-basis2}
The set $\{ \tau_m\varphi \} _{m\in\Z^d}$ is a  Riesz basis in $V^2(\varphi)$ with frame constants $0<A,\ B<\infty$   if and only if, 
\begin{equation}\label{e1}
A= \inf_{y\in \Q_d}\sum_{m \in\Z^d} |\hat \varphi(y+m)|^2 \leq \sup_{y\in \Q_d}\sum_{m \in\Z^d} |\hat \varphi(y+m)|^2 = B.
\end{equation}
\end{theorem}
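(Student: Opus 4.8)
The plan is to pass to the Fourier side and convert the Riesz-basis inequality \eqref{E-p-basis-2} into a pointwise statement about the $\Z^d$-periodization $\sigma(y):=\sum_{m\in\Z^d}|\hat\varphi(y+m)|^2$. Fix a finite sequence ${\bf d}=\{d_j\}$ and set $m_{\bf d}(y):=\sum_j d_j e^{2\pi i j\cdot y}$, a $\Z^d$-periodic trigonometric polynomial. Since $\tau_w$ acts on the Fourier transform by modulation, $\widehat{\tau_j\varphi}(y)=e^{2\pi i j\cdot y}\hat\varphi(y)$, and Plancherel's theorem gives
$$\Big\|\sum_j d_j\,\tau_j\varphi\Big\|_2^2=\int_{\R^d}|m_{\bf d}(y)|^2\,|\hat\varphi(y)|^2\,dy.$$
Breaking $\R^d$ into the translates $\Q_d+m$ and using the periodicity of $m_{\bf d}$, this equals $\int_{\Q_d}|m_{\bf d}(y)|^2\,\sigma(y)\,dy$. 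Note that $\sigma\in L^1(\Q_d)$ automatically, because $\int_{\Q_d}\sigma=\|\varphi\|_2^2<\infty$. On the other hand, since $|\Q_d|=1$, Parseval's identity for Fourier series gives $\|{\bf d}\|_{\ell^2}^2=\int_{\Q_d}|m_{\bf d}(y)|^2\,dy$.

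Passing to the equivalent squared form of \eqref{E-p-basis-2} (so that the constants match those in \eqref{e1}), the Riesz-basis property is therefore equivalent to
$$A\int_{\Q_d}|m_{\bf d}|^2\,dy\le \int_{\Q_d}|m_{\bf d}|^2\,\sigma\,dy\le B\int_{\Q_d}|m_{\bf d}|^2\,dy$$
holding for every trigonometric polynomial $m_{\bf d}$; equivalently, the quotient $\big(\int_{\Q_d}|m_{\bf d}|^2\sigma\big)\big/\big(\int_{\Q_d}|m_{\bf d}|^2\big)$ stays between $A$ and $B$ for all nonzero $m_{\bf d}$. The remaining task is to show that the infimum and the supremum of this quotient over all nonzero $m_{\bf d}$ equal the essential infimum and essential supremum of $\sigma$ on $\Q_d$; this simultaneously yields both implications and identifies the optimal constants as in \eqref{e1}.

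One direction is immediate: if $A\le\sigma\le B$ a.e., then multiplying by $|m_{\bf d}|^2\ge0$ and integrating gives the two-sided bound for every $m_{\bf d}$. For the converse I would argue by density. Complex trigonometric polynomials are dense in $L^2(\Q_d)$, so given a bounded $g\ge0$ I approximate $\sqrt g$ by trigonometric polynomials $p_n$; then $|p_n|^2\to g$ in $L^1(\Q_d)$, by Cauchy--Schwarz together with the pointwise reverse triangle inequality $\big||p_n|-\sqrt g\big|\le|p_n-\sqrt g|$. Passing to the limit extends the integral inequalities from $|m_{\bf d}|^2$ to every bounded $g\ge0$, so $\int_{\Q_d}g\,(\sigma-A)\ge0$ and $\int_{\Q_d}g\,(B-\sigma)\ge0$ for all such $g$. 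Testing against indicators of the level sets $\{\sigma<A\}$ and $\{\sigma>B\}$ then forces $A\le\sigma\le B$ a.e., and choosing $A=\mathrm{ess\,inf}_{\Q_d}\sigma$, $B=\mathrm{ess\,sup}_{\Q_d}\sigma$ shows these are the sharp constants.

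The main obstacle is this last density step. One must verify that the family $\{|m_{\bf d}|^2\}$ of squared trigonometric polynomials is rich enough to detect the a.e.\ size of $\sigma$ through the tested integral inequalities, and one must handle the fact that $\sigma$ is a priori only known to lie in $L^1(\Q_d)$ rather than in $L^\infty$. Once the extension to all bounded nonnegative test functions is justified, the identification of the optimal constants with the essential infimum and supremum is routine.
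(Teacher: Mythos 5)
Your reduction to the periodization $\sigma(y)=\sum_{m\in\Z^d}|\hat\varphi(y+m)|^2$ is the standard route; note that the paper itself does not prove this theorem (it cites Jia--Micchelli and Aldroubi--Sun--Tang), so your proposal has to stand on its own. The Fourier computation $\widehat{\tau_j\varphi}(y)=e^{2\pi i j\cdot y}\hat\varphi(y)$, the identity $\bigl\|\sum_j d_j\tau_j\varphi\bigr\|_2^2=\int_{\Q_d}|m_{\bf d}|^2\sigma\,dy$, Parseval on $\Q_d$, and the easy direction ($A\le\sigma\le B$ a.e.\ implies the Riesz bounds) are all correct, as is your handling of the squared versus unsquared constants in \eqref{E-p-basis-2}.

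The converse, however, has a genuine gap at precisely the point you flag, and as written the step fails. From $\|p_n-\sqrt g\|_{L^2(\Q_d)}\to 0$ you do get $|p_n|^2\to g$ in $L^1(\Q_d)$, but this does not allow you to ``pass to the limit'' in the middle term $\int_{\Q_d}|p_n|^2\sigma\,dy$: since $\sigma$ is a priori only in $L^1(\Q_d)$, $L^1$-convergence of $|p_n|^2$ gives no control whatsoever on $\int|p_n|^2\sigma$ (the mass of $|p_n|^2$ can concentrate where $\sigma$ is large). The repair is to break the symmetry between the two inequalities and prove the upper bound first: pass to a subsequence with $|p_{n_k}|^2\to g$ a.e.; Fatou's lemma then gives the one-sided semicontinuity you need, namely $\int g\sigma\le\liminf_k\int|p_{n_k}|^2\sigma\le B\,\liminf_k\int|p_{n_k}|^2=B\int g$ for every bounded $g\ge 0$, and testing $g=\chi_{\{\sigma>B+\epsilon\}}$ yields $\sigma\le B$ a.e., hence $\sigma\in L^\infty(\Q_d)$. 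Only now does your duality argument work for the lower constant: $\bigl|\int(|p_n|^2-g)\sigma\bigr|\le\|\sigma\|_\infty\,\bigl\||p_n|^2-g\bigr\|_1\to 0$, so $A\int g\le\int g\sigma$ for all bounded $g\ge0$, and $g=\chi_{\{\sigma<A-\epsilon\}}$ forces $\sigma\ge A$ a.e. Fatou cannot be used for the lower bound directly, because it gives the inequality in the wrong direction; this is why the order matters. An alternative one-shot fix is to test with Fej\'er-type kernels: taking $m_{\bf d}=K_N(\cdot-y_0)$, the quotient $\int|m_{\bf d}|^2\sigma\big/\int|m_{\bf d}|^2$ converges to $\sigma(y_0)$ at every Lebesgue point $y_0$ of $\sigma$, giving $A\le\sigma\le B$ a.e.\ with no boundedness issue. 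With either repair, your identification of the optimal constants with the essential infimum and supremum of $\sigma$ is correct and gives \eqref{e1}.
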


\section{ Proof of Theorem \ref{T-Riesz }}

 Let $\ell^2_0(\Z^d)\subset  \ell^2(\Z^d)$   be the  set of    sequences ${\bf a}=(a_n)_{n\in\Z^d}$ such that $a_n=0  $ whenever $|n|\ge N$, with $N=N({\bf a})\ge 0$.
 Let
$ S({\bf a})  =    \sum_{n\in\Z^d} a_n e^{2\pi i n\cdot x}$. 
Recall that $E(\Z^d)$ is a Riesz sequence in $L^2(D)$  if and only if  there exists  constants $0<A,\ B<\infty$ such that
\begin{equation}\label{ineq-S}
A\|{\bf a}\|_2^2\leq \|S({\bf a}) \|_{L^2(D)} ^2\leq B\|{\bf a}\|_2 ^2
\end{equation}
for every  ${\bf a}\in \ell^2_0(\Z^d).$ 
We gather: 
\begin{align}\nonumber
\|S({\bf a}) \|_{L^2(D)} ^2= &  \int_D \left|\sum_{n\in\Z^d} a_n e^{2\pi i n\cdot x}\right|^2 dx= \int_D\left(\sum_{n, m\in\Z^d} a_n \overline{a_m}\, e^{2\pi i (n-m)\cdot x}\right)dx
\\\label{Sa}
=&  \sum_{n, m\in\Z^d} a_n \overline{a_m}\, \int_D e^{2\pi i (n-m)\cdot x} dx
=  \sum_{n, m\in\Z^d} a_n \overline{a_m} \widehat{\chi_D}(n-m).
\end{align}
 Let $T_D$ be the operator,  initially defined in $\ell^2_0(\Z^d)$, as: 
\begin{equation}\label{e-TD} T_D({\bf a})_m= \sum_{n \in\Z^d} a_n   \widehat{\chi_D}(n-m), \ m\in\Z^d.
\end{equation}
The calculation above shows that 
$
\|S({\bf a}) \|_{L^2(D)} ^2=\l T_D({\bf a}), \ {\bf a}\r_2,  
$
where  $\l\, , \,\r_2 $  denotes the inner product  in $\ell^2(\Z^d)$. We can easily verify that    $T_D({\bf a})$ is self-adjoint    and, in  view of \eqref{Sa},    that 
  $ \l T_D({\bf a}),\ {\bf a}\r_2\ge 0$   for every ${\bf a}\in\ell^2_0(\Z^d)$; thus, \eqref{ineq-S} holds if and only if 
\begin{equation}\label{e-1}
A\|{\bf a}\|_2\leq \l T_D({\bf a}), \ {\bf a}\r_2 \leq B\|{\bf a}\|_2,\quad {\bf a}\in \ell^2_0(\Z^d).
\end{equation}
To prove \eqref{e-1} we need the following 
\begin{lemma}\label{L-Haase}
Assume that $\dsize ||T_D||_{\ell^2\to\ell^2}= \sup_{||{\bf a}||_2=1}|| T_D ({\bf a})||_2 <\infty$.
The   inequality below holds for every ${\bf a}\in \ell^2_0(\Z^d)$ such that  $||{\bf a} ||_2=1$.
\begin{equation}\label{e2}
  \displaystyle \frac{ ||T_D({\bf a})||_2^2}{  ||T_D||_{\ell^2\to\ell^2}}\leq \l T_D({\bf a}),\, {\bf a}\r_2 \leq  ||T_D||_{\ell^2\to\ell^2}
.  
\end{equation}
\end{lemma}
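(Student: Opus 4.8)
The plan is to regard $T_D$ as a bounded, self-adjoint, positive operator on all of $\ell^2(\Z^d)$ and to deduce \eqref{e2} from standard facts about such operators. Under the standing hypothesis $\|T_D\|_{\ell^2\to\ell^2}<\infty$, the operator defined on the dense subspace $\ell^2_0(\Z^d)$ extends uniquely to a bounded operator on $\ell^2(\Z^d)$, which I continue to denote $T_D$; this extension is still self-adjoint, and the relation $\langle T_D(\mathbf{a}),\mathbf{a}\rangle_2\ge 0$ established before the lemma shows that it is positive. Since $\ell^2_0\subset\ell^2$, it suffices to verify \eqref{e2} for an arbitrary unit vector $\mathbf{a}\in\ell^2$. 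Note also that $T_D\ne 0$ (indeed $\widehat{\chi_D}(0)=|D|>0$), so that $\|T_D\|_{\ell^2\to\ell^2}>0$ and the division in \eqref{e2} is legitimate.

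The right-hand inequality is immediate from Cauchy--Schwarz: for $\|\mathbf{a}\|_2=1$ one has $\langle T_D(\mathbf{a}),\mathbf{a}\rangle_2\le \|T_D(\mathbf{a})\|_2\,\|\mathbf{a}\|_2\le \|T_D\|_{\ell^2\to\ell^2}$. For the left-hand inequality I would use the positive square root $T_D^{1/2}$ supplied by the spectral theorem, which satisfies $(T_D^{1/2})^2=T_D$ and $\|T_D^{1/2}\|^2=\|T_D\|_{\ell^2\to\ell^2}$. Writing $\langle T_D(\mathbf{a}),\mathbf{a}\rangle_2=\|T_D^{1/2}\mathbf{a}\|_2^2$ and bounding
\begin{equation*}
\|T_D(\mathbf{a})\|_2^2=\|T_D^{1/2}(T_D^{1/2}\mathbf{a})\|_2^2\le \|T_D^{1/2}\|^2\,\|T_D^{1/2}\mathbf{a}\|_2^2=\|T_D\|_{\ell^2\to\ell^2}\,\langle T_D(\mathbf{a}),\mathbf{a}\rangle_2,
\end{equation*}
and then dividing by $\|T_D\|_{\ell^2\to\ell^2}$ yields the claim.

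The only delicate point is the lower bound, and specifically the functional-analytic justification that $T_D$ extends to a bounded positive self-adjoint operator admitting a square root; once this is in place the estimate is one line. If one prefers to avoid the spectral calculus, the same inequality follows by writing $\|T_D(\mathbf{a})\|_2^2=\langle T_D^2\mathbf{a},\mathbf{a}\rangle_2$ and noting that $\|T_D\|_{\ell^2\to\ell^2}T_D-T_D^2=T_D\bigl(\|T_D\|_{\ell^2\to\ell^2}I-T_D\bigr)$ is a product of two commuting positive operators, hence positive, so that $\langle T_D^2\mathbf{a},\mathbf{a}\rangle_2\le \|T_D\|_{\ell^2\to\ell^2}\langle T_D\mathbf{a},\mathbf{a}\rangle_2$. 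Either way, I expect no obstacle beyond this standard bookkeeping.
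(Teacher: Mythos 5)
Your argument is correct, but it takes a different route from the paper's on both halves of \eqref{e2}. For the right-hand inequality the paper simply cites \cite[Theorem 13.8]{Haase}; you prove it directly by Cauchy--Schwarz, which is perfectly adequate. For the left-hand inequality, both proofs reduce to the operator inequality $T_D^2\leq \|T_D\|_{\ell^2\to\ell^2}\,T_D$, but you derive it from the positive square root $T_D^{1/2}$ supplied by the spectral theorem (after extending $T_D$ to a bounded positive self-adjoint operator on all of $\ell^2(\Z^d)$, a step you justify correctly), whereas the paper avoids spectral calculus entirely: with $\alpha=\sup_{\|\mathbf{a}\|_2=1}|\langle T_D(\mathbf{a}),\mathbf{a}\rangle_2|$ and $U=\alpha I-T_D$, it verifies the algebraic identity $T_D\,U\,T_D+U\,T_D\,U=\alpha^2T_D-\alpha T_D^2$ and observes that both terms on the left are positive, since $\langle T_DUT_D\mathbf{a},\mathbf{a}\rangle_2=\langle U(T_D\mathbf{a}),T_D\mathbf{a}\rangle_2\geq 0$ and $\langle UT_DU\mathbf{a},\mathbf{a}\rangle_2=\langle T_D(U\mathbf{a}),U\mathbf{a}\rangle_2\geq 0$; hence $\alpha T_D-T_D^2\geq 0$, which yields the claim because $\alpha\leq\|T_D\|_{\ell^2\to\ell^2}$. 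What each approach buys: yours is the standard textbook argument and is shorter if the spectral theorem is taken for granted; the paper's polynomial identity needs nothing beyond the definition of a positive operator, so it is more elementary and self-contained. Note also that your fallback argument (a product of two commuting positive operators is positive) is true but does not genuinely avoid spectral calculus, since the usual proof of that fact itself constructs square roots --- the paper's identity is precisely the device that circumvents this.
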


\begin{proof}[Proof of Theorem \ref{T-Riesz }]
Let $\Phi(x) $ be as in \eqref{e-Phi}. We show that  if there exist constants   $0<A' \leq   B'  <\infty$ such that  $A' \leq \Phi(x)\leq B' $ a.e. in $\Q_d$, then $E(\Z^d)$ is a Riesz sequence in $L^2(D)$.
Since $\Phi(x)=\sum_{m \in\Z^d}   \chi_D(x+m) =\sum_{m \in\Z^d}   |\chi_D(x+m)|^2 $, 
by Theorem \ref{T-basis2}    the set  $\{ \tau_m\hat \chi_D \} _{m\in\Z^d}$ is a  Riesz basis of $V^2(\hat \chi_D )$ with frame constants $0<A' ,\ B' <\infty$.
In  view of \eqref{E-p-basis-2} and \eqref{e-TD}, the inequality 
\begin{equation}\label{e-prime} 
A' \|{\bf a}\|_2\leq \|T_D{\bf a}\|_2^2\leq B' \|{\bf a}\|_2
\end{equation}  
holds  for every ${\bf a}\in \ell^2_0(\Z^d)$. By Lemma \ref{L-Haase}, we have \eqref{e-1}, as required.

 If $E(Z^d)$ is a Riesz sequence on $D$, we argue as in the proof of  \cite[Theorem 3.1]{selvan}.   Using Plancherel's identity and the Poisson summation formula, from \eqref{Sa} we obtain
\begin{align}\nonumber
||S({\bf a}) ||_{L^2(D)} ^2 &= \sum_m {\Big \vert} \sum_{n\in\Z^d} a_n\widehat{\chi_D}(n-m) {\Big \vert} ^2    =   \int_{\Q_d}   {\Big\vert} \sum_{m\in\Z^d}   \sum_{n\in\Z^d} a_n\widehat{\chi_D}(n-m) e^{2\pi i x\cdot m} {\Big\vert}^2dx
\\ \nonumber & =\int_{\Q_d}  {\Big\vert} {\Big( } \sum_{n\in\Z^d} a_n e^{2\pi i x\cdot n} {\Big )}\sum_{m\in\Z^d}      \widehat{\chi_D}(n-m) e^{2\pi i x\cdot (m-n)} {\Big\vert}^2dx
\\ \label{new e}& =
\int_{\Q_d}  \, {\Big\vert}\sum_{n\in\Z^d} a_n e^{2\pi i n\cdot x}{\Big\vert}^2\, |\Phi(x)|^2  dx.
\end{align}
By assumption,  the integral in \eqref{new e}  is finite and so   so  $\Phi(x) <\infty$ a.e..
To show that  $\Phi(x) >0$ a.e. we argue by 
 contradiction: suppose that there exists   $\Omega\subset D$, with $|\Omega|>0$,  where $\Phi(x)\equiv 0$. We can assume that $\Omega\subset \Q_d$.    Since $E(\Z^d)$ is a Riesz basis in $L^2(\Q_d)$, we can write    $\chi_{\Q_d-\Omega}(x)=\sum_{n\in\Z^d} b_ne^{2\pi i n\cdot x}$, with $\vec b\in \ell^2(\Z^d)$.  Thus, $\int_{\Q_d} |\Phi(x)|^2 \, \left|\sum_{n\in\Z^d} b_n e^{2\pi i n\cdot x}\right|^2dx =0$  which, together with \eqref{new e}, contradicts \eqref{ineq-S}.
 \end{proof}

 
\begin{proof}[Proof of Lemma \ref{L-Haase}]
The right   inequality in  \eqref{e2}  is \cite[Theorem 13.8]{Haase} so we only need to  prove the left inequality. Let  $\alpha= \sup_{||{\bf a}||_2=1}|\l T_D({\bf a}),\, {\bf a}\r|$ and   $U= \alpha I-T_D$, where $I$ is the identity operator in $\ell^2(\Z^d)$. It is easy to verify that $U$ is positive and that   
  \begin{equation}\label{e-id3}
   T_D\,U\,T_D+\,U\,T_D\,U\,=\alpha^2T_D-\alpha T_D^2.
  \end{equation}
  The operators  $T_D\,U\,T_D$ and $\,U\,T_D\,U\,$ are   positive too; indeed, for every ${\bf a}\in \ell^2$,  we have that  $\l T_D\,U\,T_D {\bf a}, \, {\bf a}\r_2= \l \,U\,(T_D{\bf a}), \ T_D{\bf a}\r_2\ge 0$  and $\l \,U\,T_D\,U\,{\bf a} , \, {\bf a}\r_2= \l T_D(\,U\,{\bf a}), \ \,U\, {\bf a}\r_2 \ge 0$ because $T_D$ and $\,U\,$ are both positive. By \eqref{e-id3}, also the operator $\alpha T_D-  T_D^2$ is positive. For every  ${\bf a}\in \ell^2$ with $||{\bf a}||_2=1$, we have that 
  $$
  \l (\alpha T_D-  T_D^2) {\bf a}, \  {\bf a}\r_2= \alpha  \l T_D{\bf a}, \ {\bf a}\r_2- \l T_D^2{\bf a}, \ {\bf a}\r_2
  = \alpha  \l T_D{\bf a}, \ {\bf a}\r_2-||T_D{\bf a}||_2^2\ge 0 
  $$
and the left inequality in \eqref{e2} is proved.
  \end{proof}

  \medskip
 \noindent
 {\it Remark.}  From the  identity \eqref{new e}  it follows  that the constants  $A$ and $B$ in \eqref{ineq-S} are the minimum and maximum of   $\Phi(x)$  on the unit square $\Q_d$. 
%
 Thus, 
 $A $ and $B $ are   integers.
 
When  $|D|=1$,    Theorem \ref{T-basis} shows that  $E(\Z^d)$ is a Riesz sequence if and only  the integer translates of $D$ tile $\R^d$,   and so $A =B =1$. In general, if   $k \leq |D|< k+1$ for some positive integer $k$, we can easily verify that  the integer translates of  $D $   cover   $\R^d$ $k$ times but not $k+1$ times.    
Thus,   $A  \leq |D|$ and $B  \ge |D|$. 
 
 \section{Proof of Theorem \ref{T-frame}  }

\medskip
Let $D\subset \R^d$ be measurable, with $|D|\leq 1$.  By Lemma \ref{L-dil-basis}, 
 the theorem is trivial when $D\subset \Q_d $, so  we assume that $ D- \Q_d    $  has positive measure. Let  $D_1$, ...,\, $D_N ,\,...$ be a (possibly infinite) family of disjoint sets of positive measure such that      $D-\Q_d= \cup_{j} D_j$.  We can choose the $D_j$ in such way that, for certain vectors  $v_1$, ... $v_N,\, ... \in \Z^d$, we have that $D_j+v_j\subset  \Q_d.  $  Let   $D_0= D\cap \Q_d $ and $v_0=0$.
 We prove the following 
\begin{lemma} \label{L-frame2}
 
    $E(\Z^d)$ is   frame for $L^2( D)$ if and only, for every  $  v  \in\Z^d$ and every $k\ne j$,   
       \begin{equation}\label{e-assumptions-frame}|(  v +D_j)\cap  D_k |= 0.  \end{equation}

\end{lemma}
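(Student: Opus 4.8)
The plan is to reduce the frame property on $D$ to an isometry statement on the unit cube $\Q_d$ by folding $D$ into $\Q_d$ along the integer translations $v_j$. Write $E_j=D_j+v_j\subset\Q_d$ (so $E_0=D_0$), and note that $\sum_j|E_j|=|D|\le 1$, so a.e.\ point of $\Q_d$ lies in only finitely many $E_j$. For $f\in L^2(D)$ I would introduce the folded function $F(y)=\sum_j f(y-v_j)\,\chi_{E_j}(y)$ on $\Q_d$. The first step is the change of variables $x=y-v_j$ on each piece $D_j$ together with the identity $e^{2\pi i n\cdot v_j}=1$ (valid because $n,v_j\in\Z^d$), which gives $\l f,\,e^{2\pi i n\cdot x}\r_{L^2(D)}=\l F,\,e^{2\pi i n\cdot y}\r_{L^2(\Q_d)}$ for every $n\in\Z^d$. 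Since $E(\Z^d)$ is an orthonormal basis of $L^2(\Q_d)$, Parseval's identity \eqref{e-Planch} then yields
\[
\sum_{n\in\Z^d}\bigl|\l f,\,e^{2\pi i n\cdot x}\r_{L^2(D)}\bigr|^2=\|F\|_{L^2(\Q_d)}^2 ,
\]
so the frame inequalities for $E(\Z^d)$ on $L^2(D)$ are exactly the two-sided bound $A\|f\|_{L^2(D)}^2\le\|F\|_{L^2(\Q_d)}^2\le B\|f\|_{L^2(D)}^2$.

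Next I would compare $\|F\|^2$ with $\|f\|^2$. Setting $g_j=f(\cdot-v_j)\chi_{E_j}$, a change of variables gives $\|f\|_{L^2(D)}^2=\sum_j\|g_j\|_{L^2(\Q_d)}^2$, while $F=\sum_j g_j$. If the $E_j$ are pairwise essentially disjoint, the $g_j$ have disjoint supports, so $\|F\|^2=\sum_j\|g_j\|^2=\|f\|^2$; thus $E(\Z^d)$ is a Parseval frame, hence a frame. For the converse I would show that if $|E_j\cap E_k|>0$ for some $j\ne k$, the lower frame bound fails. Put $\Omega=E_j\cap E_k$ (necessarily $v_j\ne v_k$, since $D_j,D_k$ are disjoint forces $E_j\cap E_k=(D_j\cap D_k)+v_j$ to be null when $v_j=v_k$), fix $0\ne\phi\in L^2(\Omega)$, and define $f$ to equal $\phi(\cdot+v_j)$ on $\Omega-v_j\subset D_j$, to equal $-\phi(\cdot+v_k)$ on $\Omega-v_k\subset D_k$, and $0$ elsewhere; these two supports are disjoint because $D_j\cap D_k=\emptyset$. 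A direct change of variables, again using $e^{2\pi i n\cdot v_j}=e^{2\pi i n\cdot v_k}=1$, shows $\l f,\,e^{2\pi i n\cdot x}\r_{L^2(D)}=0$ for every $n$, whereas $\|f\|_{L^2(D)}^2=2\|\phi\|_{L^2(\Omega)}^2>0$; hence no $A>0$ works and $E(\Z^d)$ is not a frame.

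Finally I would translate the disjointness condition $|E_j\cap E_k|=0$ ($j\ne k$) into the stated condition \eqref{e-assumptions-frame}. For $v\in\Z^d$, the measure-preserving shift by $v_k$ gives $|(v+D_j)\cap D_k|=|(E_j+(v+v_k-v_j))\cap E_k|$. Since $E_j,E_k\subset\Q_d$, the set $(E_j+u)\cap E_k$ is contained in $(\Q_d+u)\cap\Q_d$, which is null for every $u\in\Z^d\setminus\{0\}$; thus as $v$ ranges over $\Z^d$ the only possibly nonzero value is attained at $u=v+v_k-v_j=0$, where it equals $|E_j\cap E_k|$. Hence \eqref{e-assumptions-frame} holding for all $v\in\Z^d$ and all $k\ne j$ is equivalent to $|E_j\cap E_k|=0$ for all $j\ne k$, which by the previous steps is equivalent to $E(\Z^d)$ being a frame.

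I expect the main obstacle to be the converse direction: one must exhibit a nonzero $f\in L^2(D)$ annihilated by \emph{all} the exponentials, and the cancellation is engineered precisely by the overlap $\Omega$ together with the periodicity $e^{2\pi i n\cdot v_j}=1$, while the disjointness of the pieces $D_j$ is what guarantees the candidate $f$ is well defined. The folding identity of the first step is the conceptual crux that renders every other step routine.
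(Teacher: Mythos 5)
Your proof is correct and follows essentially the same route as the paper's: fold $D$ into $\Q_d$ along the integer translations $v_j$, exploit that $E(\Z^d)$ is an orthonormal basis of $L^2(\Q_d)$ together with $e^{2\pi i n\cdot v_j}=1$ (the identities \eqref{e-Planch}), and in the overlapping case kill all inner products with a cancelling $\pm$ pair supported on the two preimages of the overlap. The only difference is packaging: your folded function $F$ handles all (possibly infinitely many) pieces $D_j$ at once and turns the frame property into an isometry statement, whereas the paper expands the square for two pieces $f=f_0+f_1$ and declares the remaining cases similar; you also spell out the reduction of \eqref{e-assumptions-frame} to $|E_j\cap E_k|=0$, which the paper leaves as an easy verification.
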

 
 It is easy to verify that   \eqref{e-assumptions-frame} is equivalent to  \eqref{e-ass-fr},  and so Theorem \ref{T-frame} is equivalent to  Lemma \ref{L-frame2}.

\begin{proof} 

Assume  that  $|(D_1 + v) \cap D_0|>0 $ for some $v\in\Z^d$  (the proof is similar in the other cases). We can assume without loss of generality that   $D_1 + v \subset \Q_d $  (see Figure 1); otherwise we let  $D_1= D_1' \cup D_1''$, with    $D'_1+v\subset \Q_d$ and we replace $D_1$ with $D_1'$.  We show that $E(\Z^d)$ is not a frame on  $L^2(D)$.

\begin{figure}[h!]
  \begin{center}
     \begin{tikzpicture}

 \fill[gray!20] (-1.5,0)--  (-1.5, 3)-- ( -.5,3)--(0, 4)--(.5,3)--(1.5,3)--(1.5, 0)--( .7, 0)-- (.7, .7)--(-.7, .7)-- (-.7, 0)--(-1.5,0);
 \fill[gray!20]  ( -.5,3)--(0, 4)--(.5,3)-- (-.5,3);
 \draw[black]( -.5,0)--(0, 1)--(.5,0)-- (-.5,0);
 \fill[gray!60]  ( -.5,0)--(0, 1)--(.5,0)-- (-.5,0);
\draw[  black] (-1.5,0)--  (-1.5, 3)-- ( -.5,3)--(0, 4)--(.5,3)--(1.5,3)--(1.5, 0)--( .7, 0)-- (.7, .7)--(-.7, .7)-- (-.7, 0)--(-1.5,0);;
 \draw[thick, black,  dashed] (-1.5,0)--(-1.5,3)--(1.5, 3)--(1.5,0)--(-1.5, 0);

 \draw [-> ]  (0,3)-- (0,1.5);
 \draw (.2 ,2.2) node [black ] {$   v$};
 \draw (-.5 ,1.5) node [black ]   {$D_0$};
  \draw (0 ,3.3) node [black ]   { \small{$D_1$}};
   \draw (0 , .2) node [black ]   { \tiny{$  D_1\!+\!v$}};
 \draw (2 , 1.5) node [black ]   {$\Q_d $};

  \end{tikzpicture}
 \caption{ } 
   \end{center}
 \end{figure}

  Every $f\in L^2(D)$ can be written as $f = f_0 + f_1$ where $f_0 = f \chi_{D-D_1}$ and $f_1 = f \chi_{D_1}$.  
Recall that     $\tau_w g(x)= g  (x+w)$.
    It follows  that
     \begin{align}\nonumber
        |\inner{e^{2\pi i n\cdot x}}{f}_{L^2(D)}|^2
        &= |\inner{e^{2\pi i n\cdot x}}{f_0}_{L^2(D-D_1)} + \inner{e^{2\pi i n\cdot x}}{f_1}_{L^2(D_1)}|^2 \\\label{1}
        &= |\inner{e^{2\pi i n\cdot x }}{f_0}_{L^2(D-D_1)} + \inner{e^{2\pi i n\cdot (x-v)}}{\tau_{-  v}f_1 }_{L^2( D_1+v))}|^2 \\\nonumber
        &= |\inner{e^{2\pi i n\cdot x}}{f_0}_{L^2(\Q_d )} + \inner{e^{2\pi i n\cdot x}}{\tau_{-v}{f_1}}_{L^2(\Q_d )}|^2 \\\nonumber
        &= |\inner{e^{2\pi i n\cdot x}}{f_0}_{L^2(\Q_d )}|^2 + |\inner{e^{2\pi i n\cdot x}}{\tau_{-v}{f_1}}_{\Q_d )}|^2 \\\nonumber & \quad + 2\Real{\left(\inner{e^{2\pi i n\cdot x}}{f_0}_{L^2(\Q_d )} \conjugate{\inner{e^{2\pi i n\cdot x}}{\tau_{-v}{f_1}}_{L^2(\Q_d )}}\right)}.
    \end{align}
We have used the change of variables $x\to x-v$ in the second inner product in \eqref{1} and  the fact that  $e^{2\pi i n\cdot v}=1$. Since $E(\Z^d)$ is an orthonormal basis in $\Q_d $,  the identities  \eqref{e-Planch} in Section 2  and the calculation above  yield
    \begin{equation}\label{e-id1}
    \begin{split} 
        \sum_{n  \in \Z^d} |\inner{e^{2\pi i n\cdot x}}{f}_{L^2(D)}|^2
        &=  \norm{f_0}_{L^2(\Q_d )}^2 + \norm{ \tau_{-v} f_1 }_{L^2(\Q_d )}^2 \\ & + 2\Real{ \inner{f_0}{\tau_{-v}{f_1}}_{L^2(\Q_d )} }.
    \end{split}
    \end{equation}
    If we let $B= (D-D_1)\cap (D_1 +v)$   
  we can choose $ f=f_1+f_0$, with $ f_1(x)= \chi_{B}(x+v)$   and $f_0(x) =- \chi_B(x) $; from \eqref{e-id1} it readily follows that  
    $\sum_{n  \in \Z^d} |\inner{e^{2\pi i n\cdot x}}{f}_{L^2(D)}|^2  =0$, which contradicts 
  \eqref{e2-frame}.
 
  \medskip
   
We now assume that  $|(w+D_j)\cap D_k|=0$ for every $k\ne j$ and every $w\in\Z^d$; we prove that  $E(\Z^d)$ is a tight frame in $L^2(D)$. 
We  assume for simplicity that    $ D_1+v  \subset \Q_d $ for some   $v \in\Z^d$. Let $f=f_0+f_1$ be as in the first part of the proof. By assumption,  $|(D_1+v)  \cap (D-D_1)|  =0$,   and so \eqref{e-id1} yields
  \begin{align*}\sum_{n  \in \Z^d} |\inner{e^{2\pi i n\cdot x}}{f }_{L^2(D)}|^2& = \norm{f_0}_{L^2(\Q_d )}^2 + \norm{ \tau_{-v} f_1 }_{L^2(\Q_d )}^2 
   \\ &=\norm{f  \chi_{D-D_1}}_{L^2(\Q_d )}^2 + \norm{   f \chi_{D_1 +v}}_{L^2(\Q_d )}^2
    = ||f||_{L^2(D)}.
   \end{align*}
 Thus, $E(\Z^d)$ is a tight frame in $L^2(D)$ as required.
\end{proof}
 
 \medskip
   The  proof of Theorem \ref{T-frame} shows that if  \eqref{e-assumptions-frame} is not  satisfied,      we can produce a function $f\in L^2(D)$ for which $\l f,\, e^{2\pi i x\cdot n}\r_{L^2(D)}=0$ for every $n\in\Z^d$, and so $E(\Z^d)$ is not complete.   This observation proves  the following: 
 
 \begin{corollary}\label{C-complete}
 $E(\Z^d)$ is complete in $L^2(D)$ if and only if  the integer translates of $D$ intersect on sets of measure $0$.
 \end{corollary}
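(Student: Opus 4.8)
The plan is to treat the two implications separately, leaning heavily on the work already carried out for Theorem~\ref{T-frame}. The direction ``translates meet only on null sets $\Rightarrow$ complete'' is immediate: if the integer translates of $D$ intersect on sets of measure zero, then \eqref{e-ass-fr} holds, so Theorem~\ref{T-frame} shows that $E(\Z^d)$ is a frame for $L^2(D)$; and, as recorded in Section~2, the lower bound in the frame inequality \eqref{e2-frame} already forces $E(\Z^d)$ to be complete. Hence only the converse needs a genuine argument.

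For the converse I would argue by contraposition, reusing the explicit construction from the proof of Lemma~\ref{L-frame2}. Assume \eqref{e-assumptions-frame} fails, so there are indices $k\ne j$ and a vector $v\in\Z^d$ with $|(v+D_j)\cap D_k|>0$. After relabelling the pieces and translating one of them into $\Q_d$ exactly as in that proof, I may assume the overlap occurs inside $\Q_d$ and work with the decomposition $f=f_0+f_1$, $f_1=f\chi_{D_1}$, $f_0=f\chi_{D-D_1}$, to which the identity \eqref{e-id1} applies: it writes $\sum_{n\in\Z^d}|\inner{e^{2\pi i n\cdot x}}{f}_{L^2(D)}|^2$ as $\norm{f_0}_{L^2(\Q_d)}^2+\norm{\tau_{-v}f_1}_{L^2(\Q_d)}^2+2\Real\inner{f_0}{\tau_{-v}f_1}_{L^2(\Q_d)}$. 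Taking $B=(D-D_1)\cap(D_1+v)$ and setting $f_1(x)=\chi_B(x+v)$, $f_0(x)=-\chi_B(x)$ makes the two norm terms each equal $|B|$ and the cross term equal $-2|B|$, so the right-hand side vanishes. Consequently every coefficient $\inner{e^{2\pi i n\cdot x}}{f}_{L^2(D)}$ is $0$.

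The one point that needs care is to confirm that this $f$ is a nonzero element of $L^2(D)$. Since $B\subset D_1+v$ we have $B-v\subset D_1$, so $f_1$ is supported in $D_1$ and $f_0$ in the disjoint piece $B\subset D-D_1$; as $|B|>0$, the function $f$ (which equals $-1$ on $B$ and $+1$ on $B-v$) is not the zero element. Therefore $E(\Z^d)$ is not complete, and the contrapositive is established. I expect the main obstacle to be organizational rather than analytic: namely, checking that an arbitrary overlap $(v+D_j)\cap D_k$ can always be brought into the model situation governed by \eqref{e-id1}. This is exactly the content of the ``the proof is similar in the other cases'' remark in Lemma~\ref{L-frame2}, and it is discharged by relabelling and translating the relevant pieces into $\Q_d$ before invoking the identity.
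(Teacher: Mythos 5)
Your proposal is correct and follows essentially the same route as the paper: the paper also obtains one direction from Theorem~\ref{T-frame} together with the fact that a frame is complete, and proves the converse by observing that the explicit function $f=f_0+f_1$ built in the proof of Lemma~\ref{L-frame2} (with $f_1(x)=\chi_B(x+v)$, $f_0=-\chi_B$, $B=(D-D_1)\cap(D_1+v)$) has all coefficients $\l f, e^{2\pi i n\cdot x}\r_{L^2(D)}$ equal to zero, so $E(\Z^d)$ is not complete. Your added check that this $f$ is a nonzero element of $L^2(D)$ is a detail the paper leaves implicit, but it is not a departure from its argument.
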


  \begin{proof}[Proof of Theorem \ref{T-basis}] 
  We have proved that  a) $\iff$ b);  we show that $b)\iff c)$.
  By Corollary \ref{C-complete}, 
  $E(\Z^d)$ is  complete in $L^2(D)$ if and only if  the integer translates of $D$ overlap only on sets of measure zero.  Thus, c) $\Rightarrow$ b). Let us prove that  b) $\Rightarrow$ c);  let   $D_0,\, D_1$, ...,\, $D_N ,\,...$  and $v_0, \,v_1, \, ...,\, \, v_N,\, ...$ be   as  in the proof of Lemma \ref{L-frame2}. 
  Since  $|(D_j+v_j)\cap (D_k+v_k)|=0 $ when $k\ne j$, and  
  \begin{equation}\label{e-Q-D}
  1= |\Q_d|=|D|= |D_0| +  \sum_j |D_j+v_j|  \end{equation}
   necessarily  $ \cup_j  (D_j+v_j) =\Q_d$   and   the integer translates of $D$ tiles $\R^d$.   
   
   By Fuglede's theorem, c) $\iff$ d). Clearly d) $\Rightarrow$  e);  to finish the proof of the theorem we  show that e) $\Rightarrow$ c).  By Theorem \ref{T-Riesz }, the integer translates of $D$ cover $\R^d$; thus, $ \cup_j  (D_j+v_j) =\Q_d$ and from \eqref{e-Q-D} follows that    the $D_j+v_j$'s can only intersect   on set of measure zero.  Thus, the integer translated of $D$ can only intersect on sets of measure zero and   c)   is proved.

  \end{proof} 
  \section{The broken interval}
  
In this section we solve  the first problem stated in the introduction. We  let $J= [0,\alpha)\cup [\alpha+r, L+r)\subset \R$, with   $0<\alpha<L$ and   $r>0$. 
  
  By Lemma \ref{L-frame2} and Theorem \ref{T-Riesz }, $E(\Z)$ is a frame on $L^2(J)$ if and only if  the integer translates of $J$ do not overlap in $[0,1]$ and it is a Riesz sequence if and only if the integer translates of $J$ cover $\R$.   

\medskip

Let $[r]$ be the integer part of $r$, i.e., the largest integer $n \leq r$; let   $\{r\}=   r - [r]$   be the fractional part of $r$.
We prove the following
\begin{theorem}\label{T-riesz-interval}

a)   $E(\Z)$ is a frame on $J$ if and only if    $L + \{r\} \leq 1$.
 \\
b) $E(\Z)$ is a Riesz sequence on   $J $ if and only if one of the following is true: \begin{itemize}
\item[i)]
	$\alpha \geq 1$ or $L - \alpha \geq 1$ 
\item[ ii)]
	$\{r\} = 0$ and $L \geq 1$
\item[ iii)] $1 \leq L < 2$ and $L + \{r\} \geq 2$;
\end{itemize}
\medskip

\end{theorem}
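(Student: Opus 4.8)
The plan is to reduce everything to the periodization $\Phi(x)=\sum_{m\in\Z}\chi_J(x+m)$ and to invoke the two structural results already established: by Theorem~\ref{T-frame} (equivalently Lemma~\ref{L-frame2}), $E(\Z)$ is a frame on $L^2(J)$ iff $\Phi\le 1$ a.e., and by Theorem~\ref{T-Riesz } it is a Riesz sequence iff there is $A>0$ with $\Phi\ge A$ a.e.\ (the upper bound being automatic since $J$ is bounded, so $\Phi$ is bounded). Since $|J|=\alpha+(L-\alpha)=L$, Corollary~\ref{C-density} makes $L\le 1$ necessary for the frame property and $L\ge 1$ necessary for the Riesz property, which lets me discard uninteresting ranges at the outset. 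I would then write $\Phi=\Phi_1+\Phi_2$, where $\Phi_i$ is the periodization of the blocks $I_1=[0,\alpha)$ and $I_2=[\alpha+r,L+r)$, and note that on one period $[0,1)$ one has $\Phi_1=\chi_{[0,\alpha)}$ and $\Phi_2=\chi_{A_2}$, where $A_2=[\beta,\beta+(L-\alpha))\bmod 1$ with $\beta=\{\alpha+r\}=\{\alpha+\{r\}\}$, \emph{provided} $\alpha<1$ and $L-\alpha<1$. These are exactly the ranges that survive the density restriction (for a)) or that are not already settled by one block alone (for b)), so the simple indicator form of $\Phi_1,\Phi_2$ is all I ever need.

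For part a) I would work in the range $\alpha<1$, $L-\alpha<1$ forced by $L\le1$, so that $\Phi\le1$ a.e.\ is equivalent to the two arcs $[0,\alpha)$ and $A_2$ being disjoint mod $1$. Splitting on the position of $\beta$: if $\alpha+\{r\}<1$ then $\beta=\alpha+\{r\}\ge\alpha$ and $A_2=[\alpha+\{r\},L+\{r\})\bmod1$ lies in $[\alpha,1)$, hence is disjoint from $[0,\alpha)$ exactly when it does not wrap past $1$, i.e.\ when $L+\{r\}\le1$; if instead $\alpha+\{r\}\ge1$ then $\beta=\alpha+\{r\}-1<\alpha$, so $A_2$ begins inside $[0,\alpha)$ and the arcs always overlap, and one checks $L+\{r\}>1$ here, keeping the criterion consistent. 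Since $L+\{r\}\le1$ forces $\alpha+\{r\}<L+\{r\}\le 1$, this yields a) cleanly: $E(\Z)$ is a frame iff $L+\{r\}\le1$.

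For part b) I would first dispose of case (i): if $\alpha\ge1$ the translates $I_1+n=[n,n+\alpha)$ already cover $\R$, and if $L-\alpha\ge1$ the translates of $I_2$ do, so $E(\Z)$ is a Riesz sequence. In the remaining range $\alpha<1$, $L-\alpha<1$ the block $I_1$ leaves the periodic gap $[\alpha,1)$ uncovered, and covering of $\R$ is equivalent to $[\alpha,1)\subseteq A_2\bmod1$. When $\{r\}=0$ the set $J$ is, up to an integer translation, the interval $[0,L)$, whose translates cover $\R$ iff $L\ge1$, giving (ii). When $\{r\}>0$ I track the (possibly wrapping) arc $A_2$ of length $L-\alpha$ and compute the covered set $[0,\alpha)\cup A_2$ on $[0,1)$; a short bookkeeping shows the gap $[\alpha,1)$ is filled iff $L+\{r\}\ge2$ (this inequality forces $\alpha+\{r\}>1$, hence $\beta<\alpha$, so $A_2$ abuts $[0,\alpha)$ on the left while its right end, or its wrapped tail, reaches $1$). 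Since $\alpha<1$ and $L-\alpha<1$ give $L<2$, while $\{r\}<1$ together with $L+\{r\}\ge2$ forces $L>1$, this is exactly condition (iii) $1\le L<2$, $L+\{r\}\ge2$.

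The main obstacle is precisely this last case analysis: one must control simultaneously the reduced start $\beta=\{\alpha+\{r\}\}$, whether $\alpha+\{r\}$ exceeds $1$, and whether $\beta+(L-\alpha)$ exceeds $1$ (so that $A_2$ wraps), and then verify that the resulting boolean outcomes collapse to the single clean criteria (ii) and (iii). The frame part and the easy Riesz cases (i)–(ii) are essentially immediate once the periodization $\Phi=\Phi_1+\Phi_2$ is written down in indicator form.
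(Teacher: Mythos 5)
Your proposal is correct and follows essentially the same route as the paper: both rest on Theorem \ref{T-frame} and Theorem \ref{T-Riesz } to convert the frame/Riesz properties into non-overlapping/covering of the integer translates of $J$, both reduce $r$ to its fractional part (your reduction $\beta=\{\alpha+\{r\}\}$ plays the role of the paper's Lemma \ref{L-frac-part}), and both conclude with the same mod-$1$ interval case analysis, including the same key observation that $L+\{r\}\ge 2$ forces $\alpha+\{r\}\ge 1$ when $\alpha<1$ and $L-\alpha<1$. The only cosmetic differences are your use of the periodization $\Phi=\Phi_1+\Phi_2$ on $[0,1)$ and of Corollary \ref{C-density} to prune ranges at the outset, where the paper works directly with translated intervals and covering of the gap $(\alpha,\alpha+r)$.
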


 To prove  b) we will need the following 
 \begin{lemma}\label{L-frac-part}
The  integer translates of $J$ cover  $\R$ if and only if the integer translates of $J' = [0,\alpha) \cup [\alpha + \{r\}, L + \{r\})$ cover  $\R$.  
 \end{lemma}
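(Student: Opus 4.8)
The plan is to show that replacing the gap $r$ by its fractional part $\{r\}$ does not affect whether the integer translates of $J$ cover $\mathbb{R}$. The key observation is that $r$ and $\{r\}$ differ by an integer, namely $r = [r] + \{r\}$ with $[r]\in\mathbb{Z}$. Since we are translating $J$ by all integers $n\in\mathbb{Z}$ when forming $\bigcup_{n\in\mathbb{Z}}(J+n)$, translating the right-hand piece $[\alpha+r, L+r)$ of $J$ by an additional integer just permutes which translate covers a given point. So intuitively the covering property should be insensitive to the integer part of $r$.

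To make this precise, first I would write $J = I_1 \cup I_2$ where $I_1 = [0,\alpha)$ and $I_2 = [\alpha+r, L+r)$, and correspondingly $J' = I_1 \cup I_2'$ with $I_2' = [\alpha+\{r\}, L+\{r\})$. Note $I_2 = I_2' + [r]$, that is, the second piece of $J$ is exactly an integer translate (by $[r]$) of the second piece of $J'$. Then I would compute the union of integer translates directly:
\begin{align*}
\bigcup_{n\in\mathbb{Z}}(J+n) &= \bigcup_{n\in\mathbb{Z}}(I_1+n) \cup \bigcup_{n\in\mathbb{Z}}(I_2+n) \\
&= \bigcup_{n\in\mathbb{Z}}(I_1+n) \cup \bigcup_{n\in\mathbb{Z}}(I_2'+[r]+n).
\end{align*}
Since $[r]\in\mathbb{Z}$ and the inner union runs over all integers $n$, the reindexing $m = n+[r]$ shows that $\bigcup_{n\in\mathbb{Z}}(I_2'+[r]+n) = \bigcup_{m\in\mathbb{Z}}(I_2'+m)$. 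Hence
$$\bigcup_{n\in\mathbb{Z}}(J+n) = \bigcup_{n\in\mathbb{Z}}(I_1+n) \cup \bigcup_{m\in\mathbb{Z}}(I_2'+m) = \bigcup_{n\in\mathbb{Z}}(J'+n),$$
so the two covering sets are literally identical, not merely equal up to measure zero. The equivalence of the covering properties follows immediately.

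This argument is essentially a change-of-index bookkeeping exercise, so I do not expect any serious obstacle. The only point requiring a little care is making sure the decomposition $J = I_1\cup I_2$ and the identity $I_2 = I_2'+[r]$ are stated cleanly, and that the reindexing of a union over all of $\mathbb{Z}$ by an integer shift is recognized as trivially bijective. One should also note that the first pieces $I_1$ are identical in $J$ and $J'$, so no adjustment is needed there. Since the statement concerns covering of $\mathbb{R}$ (which, per the conventions in Section 2, is understood up to a set of measure zero), it is worth remarking that the identity of the two unions is exact, making the measure-zero caveat irrelevant here.
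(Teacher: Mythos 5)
Your proof is correct and rests on the same key observation as the paper's: the second piece of $J$ is exactly the integer translate by $[r]$ of the second piece of $J'$, so reindexing the union over $\Z$ absorbs the shift. The paper argues this pointwise (given $x$ in some translate of $J$, it exhibits the translate of $J'$ containing it), while you state the same fact globally as an exact equality of the two unions; the substance is identical.
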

 
 \begin{proof} 
 If the integer translates of $J$ cover   $\R$ then for   $ x\in\R$, there is an integer $m$ such that either $x \in (m, \alpha +m)$ or $x \in (\alpha + r + m,\ L + r + m)$. If $x \in (m, \alpha + m)$ then clearly $x\in J'+m$ as well. If $x \in (\alpha + r + m,\ L + r + m)$, then $x \in (\alpha + \{r\} + [r] + m,\ L + \{r\} + [r] + m)$, i.e. $x$ is in the translation of $J'$ by $[r]+m$. The converse is similar.   
  \end{proof}

\begin{proof}[Proof of Theorem \ref{T-riesz-interval}]

By Theorem \ref{T-frame} and Lemma \ref{L-frac-part},  $E(\Z)$ is a frame on $J$ if and only if    the integer translates of $[0,\alpha)\cup[\alpha + \{r\}, L + \{r\})$ do not intersect in $[0,1]$, or if  and only if  $(0,\alpha) \cap (\alpha + \{r\} - 1, L + \{r\} - 1) = \varnothing$. This is equivalent to having either $\alpha \leq \alpha + \{r\} -1$, which is impossible, or $L + \{r\} \leq 1.$ That proves part  a).

\medskip
 
%


	Let us prove part  b). By Lemma \ref{L-frac-part} we can assume that $r = \{r\}$, i.e.  that $0 \leq r < 1$. 
	
	By  Theorem \ref{T-Riesz }, $E(\Z)$ is a Riesz sequence on J if and only if the integer translates of J cover $\R$.
	If one of the connected components $[0,\alpha)$ or  $[\alpha + r, L + r)$ covers $\R$ by integer translations, we have that either  $\alpha \geq 1$ or $L - \alpha \geq 1$, and   i) is proved. 
	
	If neither component covers $\R$ by integer translations, i.e. if both $\alpha < 1$ and $L - \alpha < 1$, we can consider   2 sub-cases:
	
\begin{itemize}\item If $r = 0$, we have that  $J=[0, L)$, and  the integer translates of $J$ cover $\R$ if and only if $L \geq 1$; that proves  ii).

	\item Suppose next that $r > 0$.
    The integer translates of $J$ cover $\R$ if and only if the "gap" $(\alpha, \alpha + r)$ is covered by integer translates of $J$. This is possible  if and only if 
    $ (1, \alpha +1) \cup (\alpha + r - 1, L + r -1)\supset (\alpha, \alpha + r) $ (see Figure2).
    
    \begin{figure}[h!]
  \begin{center}
     \begin{tikzpicture}
      %
      
\draw[  black, ->] (-.5 ,0 )--  ( 5 , 0 ) ;

    \draw[  black, thick] (0,0)--  ( 1.5, 0); 
   \draw[  black, thick] (  2.5,0)--  (   3.5, 0 );  
    
    \draw[  black, thick] (1,1)--  ( 2.5, 1); 
   \draw[  black, thick] (  3.5,1)--  (   4.5, 1);  
  
  \draw (  0,-.2 ) node [black ]   {{\tiny $  0 $}};
  
  \draw (  .2, .2 ) node [black ]   {{\tiny $  J $}};
  
\draw (  1.5, -.2) node [black ]   {{\tiny $   \alpha  $}};
\draw (2.5, -.2  )  node [black ]   {{\tiny $ r+\alpha  $}};
\draw (3.5,-.2  )  node [black ]   {{\tiny $ r+L  $}};
\draw (5,1 )  node [black ]   {{\tiny{\bf $J+1 $}}};

 \draw [black,fill]  (1,1 ) circle [radius=0.06];
 \draw [black,fill]  (2.5,1 ) circle [radius=0.06];
 \draw [black,fill]  (3.5,1 ) circle [radius=0.06];
 \draw [black,fill]  (4.5,1 ) circle [radius=0.06];

 \draw[black, dashed] (1,1)--(1,0);
 \draw [black,fill]  (0, 0) circle [radius=0.06];
\draw [black,fill]  (1.5, 0) circle [radius=0.06];
  \draw [black,fill]  (2.5, 0) circle [radius=0.06];
  \draw [black,fill]  (3.5, 0) circle [radius=0.06];
  \draw [black,fill]  (1, 0) circle [radius=0.06];
  \draw (1,-.2  )  node [black ]   {{\tiny $ 1 $}};
    \end{tikzpicture}
 \caption{ } 
   \end{center}
 \end{figure}
    We have $(1, \alpha + 1) \cap (\alpha, \alpha + r) = (1, \alpha + r)$, because $\alpha , r < 1$. Thus, J covers $\R$ if and only if 
    $(\alpha + r -1, L + r - 1) \cap (\alpha, \alpha + r) \supset (\alpha, 1)$. This is equivalent to the conditions 
   \begin{center} 
$   \begin{cases}  r - 1 \leq 0 \\ L + r -1 \geq 1 \\ \alpha + r \geq 1 \end{cases} \Longleftrightarrow  \begin{cases} r  \leq 1 \\ L + r  \geq 2 \\ \alpha + r \geq 1 \end{cases}.$
   \end{center}
 Since $\alpha < 1$ and $L - \alpha < 1$  by assumption, and recalling that    $r < 1$,  we can see at once that  the condition $L + r \geq 2$ implies $\alpha + r \geq 1$. Indeed, if $\alpha + r < 1$, then $$L + r = L - \alpha + \alpha + r < L - \alpha + 1 < 2.$$  Thus, the integer translates of $J$ covers $\R$ if and only if $L + r  \geq 2$, and we have iii). The theorem is proved.
    \end{itemize}
\end{proof}

\section{The rotated square}

Let $ Q_h= Q_h (0)=[-\frac{h}{2}, \frac{h}{2}] \times [-\frac{h}{2}, \frac{h}{2}]$ be the  square in $\R^2$ centered at the origin with sides of length $h$. Let $A_\theta=\begin{bmatrix}
        \cos{(\theta)} & \sin{(\theta)} \\
        -\sin{(\theta)} & \cos{(\theta)}
        \end{bmatrix}$ be the matrix of a   rotation by an angle $\theta$, and let  
$ 
    Q_{h,\theta}   =A_{\theta}Q_h(0)
     $
be the square obtained from the rotation of  $Q_h(0)$.  The following theorem offers a complete solution to Problem 2: 

\begin{theorem}
   a)  $E(\Z^2)$ is a Riesz sequence  on  $L^2(Q_{h,\theta})$ if and only if
   $ h\ge 1-\sin(2\theta) $. 
   
   b)  $E(\Z^2)$ is  a frame  on $L^2(Q_{h,\theta})$ if  and only if $h\leq \frac{1}{\sin\theta+\cos\theta}$.
\end{theorem}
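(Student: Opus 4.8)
The plan is to reduce both assertions to elementary plane geometry via the two characterizations already established, and then to compute the resulting thresholds. By Theorem~\ref{T-frame}, $E(\Z^2)$ is a frame on $L^2(Q_{h,\theta})$ if and only if the integer translates $\{Q_{h,\theta}+k\}_{k\in\Z^2}$ are pairwise disjoint up to measure zero (a packing), while by Theorem~\ref{T-Riesz } it is a Riesz sequence if and only if those same translates cover $\R^2$ up to measure zero. Before computing I would exploit symmetry: both properties are unchanged under $\theta\mapsto\theta+\tfrac\pi2$ (rotating the whole picture preserves $\Z^2$ and the symmetries of the cube) and under the reflections $\theta\mapsto-\theta$ and $\theta\mapsto\tfrac\pi2-\theta$ (which also preserve $\Z^2$). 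Hence it suffices to treat $\theta\in[0,\tfrac\pi4]$, where $\cos\theta\ge\sin\theta\ge0$ and $h(\sin\theta+\cos\theta)$ is exactly the width of $Q_{h,\theta}$ in each coordinate direction; this is the quantity that will govern the frame answer.

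For part b) I would use that $Q_{h,\theta}$ is centrally symmetric, so two translates $Q_{h,\theta}+k$ and $Q_{h,\theta}+j$ overlap in positive measure exactly when the nonzero vector $k-j$ lies in the interior of the difference body $Q_{h,\theta}-Q_{h,\theta}=2Q_{h,\theta}=Q_{2h,\theta}$. Thus the packing condition is equivalent to the statement that no nonzero lattice point lies in the open square $Q_{2h,\theta}$, and membership of $p$ is tested by $\|A_\theta^{-1}p\|_\infty\le h$. Concretely, the translate by $(1,0)$ is separated from $Q_{h,\theta}$ as soon as the projections of the two squares onto the $x$-axis are disjoint, i.e. as soon as $h(\sin\theta+\cos\theta)\le1$; the vector $(0,1)$ gives the same bound by symmetry, and I would then verify that the diagonal vectors $(1,\pm1)$ impose nothing stronger. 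This yields $h\le(\sin\theta+\cos\theta)^{-1}$ as a sufficient condition for the packing, and the remaining point is to show it is also necessary, which reduces to checking that when $h(\sin\theta+\cos\theta)>1$ some nonzero lattice point actually enters the open body $Q_{2h,\theta}$.

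For part a) I would pass to the dual picture: the translates cover $\R^2$ if and only if every translate of $Q_{h,\theta}$ captures a lattice point, i.e. the covering radius of $\Z^2$ measured in the gauge of $Q_{h,\theta}$ is at most $h$. Changing to the rotated coordinates $q=A_\theta^{-1}p$ turns this into an ordinary $\ell^\infty$ covering-radius problem for the rotated lattice $A_\theta^{-1}\Z^2$, whose deepest hole I expect to sit at the centre of a lattice cell. Evaluating the gauge there, and checking that this value dominates the one at the competing candidate holes (the edge and face midpoints), produces the covering threshold as an explicit function of $\theta$, which I would then rewrite using $2\sin\theta\cos\theta=\sin2\theta$ to reach the closed form appearing in a).

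The step I expect to be the main obstacle is the verification that the configuration singled out in each case is genuinely extremal, \emph{uniformly} over $\theta\in[0,\tfrac\pi4]$: for the frame, that no separating direction other than the coordinate axes is forced and that the nearest lattice vector entering $Q_{2h,\theta}$ is indeed among the four neighbours of the origin; for the Riesz sequence, that the deepest hole is the cell centre rather than a boundary point of the rotated Voronoi region. Both amount to short but delicate comparisons of finitely many candidates, after which the symmetry reduction of the first paragraph extends the conclusion to all $\theta$. As a consistency check I would use the density bound of Corollary~\ref{C-density}, which forces the covering threshold to be at least $1$ and the packing threshold at most $1$, since $|Q_{h,\theta}|=h^2$.
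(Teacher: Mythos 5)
Your two reductions (difference body for the packing/frame question, $\ell^\infty$ covering radius for the covering/Riesz-sequence question) are the right machinery, and they are genuinely different from the paper's argument, which works with squares inscribed in or passing through the vertices of $\Q_2$. But the two verifications you defer are exactly the steps that fail. For b): since $A_\theta^{-1}$ is a Euclidean isometry, every nonzero $n\in\Z^2$ with $\|n\|_2\ge\sqrt 2$ satisfies $\|A_\theta^{-1}n\|_\infty\ge \|n\|_2/\sqrt 2\ge 1$, while $n=(\pm1,0),(0,\pm1)$ give $\|A_\theta^{-1}n\|_\infty=\max(|\cos\theta|,|\sin\theta|)$. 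So your own (correct) criterion says the translates pack if and only if $h\le\max(|\cos\theta|,|\sin\theta|)$, and for $0<\theta<\pi/4$ this is \emph{strictly larger} than $(\sin\theta+\cos\theta)^{-1}$, because $\cos\theta(\sin\theta+\cos\theta)=\tfrac12(1+\sin 2\theta+\cos 2\theta)>1$ there. Concretely, for $\theta=\pi/8$, $h=0.9$ one has $h(\sin\theta+\cos\theta)\approx 1.18>1$, yet no nonzero lattice point lies in the open body $Q_{2h,\theta}$ (the minimum of the gauge is $\cos(\pi/8)\approx 0.924>h$); the necessity step you promise cannot be carried out. The slip is the projection heuristic: disjoint $x$-projections imply disjoint squares, but not conversely --- when the projections of $Q_{h,\theta}$ and $Q_{h,\theta}+(1,0)$ overlap, the portions of the two squares over the common strip sit near heights $+\tfrac h2(\cos\theta-\sin\theta)$ and $-\tfrac h2(\cos\theta-\sin\theta)$ and remain disjoint. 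For a): the deepest hole of $A_\theta^{-1}\Z^2$ is indeed the cell centre, but evaluating the gauge there gives covering radius $\tfrac12(\sin\theta+\cos\theta)$, i.e. the covering threshold $h\ge\sin\theta+\cos\theta=\sqrt{1+\sin 2\theta}$; no identity involving $\sin2\theta=2\sin\theta\cos\theta$ turns this into $1-\sin 2\theta$.

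The deeper point, which your own final "consistency check" should have exposed, is that the statement is irreconcilable with your method: for every $\theta\in(0,\pi/2)$ the claimed threshold $1-\sin 2\theta$ is strictly less than $1$, while by Theorem \ref{T-Riesz } a Riesz sequence forces the integer translates of $Q_{h,\theta}$ to cover $\R^2$, hence $h^2=|Q_{h,\theta}|\ge 1$ (Corollary \ref{C-density}); at $\theta=\pi/4$ the statement would even make arbitrarily small squares admissible. So your computations, carried out correctly, refute rather than confirm both thresholds: they give that $E(\Z^2)$ is a frame if and only if $h\le\max(|\cos\theta|,|\sin\theta|)$ and a Riesz sequence if and only if $h\ge|\cos\theta|+|\sin\theta|$. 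This mismatch is not your framework's fault; the paper's own proof has corresponding defects (the segment $Q_2Q_3$ there has length $\sqrt{1-\sin2\theta}=|\cos\theta-\sin\theta|$, not $1-\sin2\theta$; the four lines drawn bound a square too small to contain $\Q_2$; and the necessity argument in b) assumes that any part of $Q_{h,\theta}$ protruding from $\Q_2$ creates overlapping translates, which the $\theta=\pi/8$, $h=0.9$ example contradicts, since by Theorem \ref{T-frame} that square still yields a frame). A correct write-up along your lines must therefore end at the corrected thresholds, not the stated ones, and should say so explicitly.
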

\begin{proof}
   We first prove  Part a). 
 Let $P_1= (\frac 12, \frac 12), \ P_2= (-\frac 12, \frac 12)\ P_3= (-\frac 12, -\frac 12)\ P_4= (\frac 12, -\frac 12)$  be the vertices of $\Q_2$. We first find conditions on $h$ and $\theta$  for which the points $P_1$, ...,  $P_4$ lie on the sides of $Q_{h,\theta}$.
  \begin{figure}[h!]
  \begin{center}
     \begin{tikzpicture}
      %
      
\draw[  black, dashed] (-1.5 ,-1.5 )--  ( 1.5 , -1.5 )-- (1.5 , 1.5 )--(-1.5 , 1.5 )--(-1.5 , -1.5 );

   \fill[gray!10](-2.4,0)--  ( 0, -2.4)--  ( 2.4,0)-- (0,  2.4 )--(-2.4,0);
   \draw[  black] (-2.4,0)--  ( 0, -2.4)--  ( 2.4,0)-- (0,  2.4 )--(-2.4,0);
 \draw[  black, dashed] (-1.5 ,-1.5 )--  ( 1.5 , -1.5 )-- (1.5 , 1.5 )--(-1.5 , 1.5 )--(-1.5 , -1.5 );

%
\draw[  black] (-3 ,0)--  ( 0, -3 )--  ( 3 ,0)-- (0,  3  )--(-3 ,0);
\draw (0 ,0) node [black ]   { $ Q_{h,\theta}$};

\draw (1.8, -1.8 ) node [black ]   {{\small $  P_4  $}};
\draw (1.8, 1.8 )  node [black ]   {{\small $  P_1  $}};
\draw (-1.8, 1.8 )  node [black ]  {{\small $  P_2  $}};
\draw (-1.8, -1.8 ) node [black ]  {{\small $  P_3 $}};

\draw (-3.3,0 ) node [black ]   {{\small $  Q_3  $}};
\draw (3.3,0 )  node [black ]   {{\small $  Q_1  $}};
\draw (0, -3.3 )  node [black ]  {{\small $  Q_4  $}};
\draw (0,3.3) node [black ]  {{\small $  Q_2 $}};
\draw (2.5,2 ) node [black ]  {{  $Q_{l(\theta), \theta}$}};

\fill[gray!60] ( 1.5 , 1.5 )--  ( 1.1 ,  1.5 )-- (1.1 , 1.3 )--( 1.5 , 1.3)--( 1.5 ,  1.5 );
\draw[  black, thick] ( 1.5 , 1.5 )--  ( 1.1 ,  1.5 )-- (1.1 , 1.3 )--( 1.5 , 1.3)--( 1.5 ,  1.5 );
 
\fill[gray!60] ( -1.5 , -1.5 )--  (- 1.1 ,  -1.5 )-- (-1.1 , -1.3 )--(- 1.5 , -1.3)--( -1.5 , -1.5 );
\draw[  black, thick] (- 1.5 ,- 1.5 )--  ( -1.1 , - 1.5 )-- (-1.1 , -1.3 )--( -1.5 , -1.3)--( -1.5 ,  -1.5 );

\fill[gray!60] (  1.5 , -1.5 )--  (  1.1 ,  -1.5 )-- ( 1.1 , -1.3 )--(  1.5 , -1.3)--(  1.5 , -1.5 );
\draw[  black, thick] ( 1.5 ,- 1.5 )--  (  1.1 , - 1.5 )-- ( 1.1 , -1.3 )--(  1.5 , -1.3)--(  1.5 ,  -1.5 );

\fill[gray!60] ( -1.5 ,  1.5 )--  (- 1.1 ,   1.5 )-- (-1.1 ,  1.3 )--(- 1.5 ,  1.3)--( -1.5 ,  1.5 );
\draw[  black, thick] (- 1.5 ,  1.5 )--  ( -1.1 ,   1.5 )-- (-1.1 ,  1.3 )--( -1.5 ,  1.3)--( -1.5 ,   1.5 );

\draw [black,fill]  (1.5, 1.5) circle [radius=0.06];
  \draw [black,fill]  (1.5, -1.5) circle [radius=0.06];
  \draw [black,fill]  (-1.5, 1.5) circle [radius=0.06];
  \draw [black,fill]  (-1.5,- 1.5) circle [radius=0.06];
  \end{tikzpicture}
 \caption{ } 
   \end{center}
 \end{figure}
 
 Let $\ell_{1}:  y-\frac 12=\tan(\theta)(x-\frac 12)$,  
 $ \ell_{2}:  y-\frac 12=-\frac 1{\tan(\theta) } (x+\frac 12)$ and $\ell_{3}:  y+\frac 12=\tan(\theta)(x+\frac 12)
 $ 
 be   the equations of the sides of $Q_{h,\theta}$ that contain  the points $P_1$, $P_2$ and $P_3$, resp.
  It is easy to verify that $\ell_2$ intersects $\ell_1$ and $\ell_3$ at the points  
 $ 
 Q_2= \left( -\frac 12\cos(2\theta), \ \frac 12(1-\sin(2\theta))\right)$ and $
 Q_3= \left( -\frac 12(1-\sin(2\theta)) , \ -\frac 12\cos(2\theta)\right)
 $,  
 and that  the length of the segment that join $Q_2$ and $Q_3$ equals to $  l(\theta) =1-\sin(2\theta)$.
 Thus, when $h\ge   l(\theta)$, the set $E(\Z^2)$ is a Riesz sequence on $L^2(Q_{h, \theta})$.
 
 We show that when $h < l(\theta)$ the integer translates of $Q_{h, \theta}$ do not cover  the plane  anymore. Indeed, if $h < l(\theta)$, the four vertices of $\Q_2$ are outside the square  $Q_{l(\theta), \theta}$ and have positive distance from the boundary of $Q_{h,\theta}$.  We can find a small rectangle $R$   with sides parallel to the sides of $\Q_2$ for which $  R+P_j  \subset \Q_2-Q_{h, \theta}$ for every $j$ (see Figure 3).  The integer translates of $Q_{h,\theta}$ cannot cover   the rectangles $R+P_j$ and so the  condition of Theorem \ref{T-Riesz } is not verified. 
 
 \medskip

  \begin{figure}[h!]
  \begin{center}
     \begin{tikzpicture}

  \draw[  black, dashed] (-2, 1)--(3,1)--(3,6)--(-2, 6)--(-2, 1);
   
\draw (3.5,5.2)  node [black ]  {{\small $  \Q_2  $}};
  \draw[black] (1,7)--(4,3)--(0,0)--(-3,4)--(1,7);
  \fill[gray!60](.5,6)--(.5, 6.3)--(.2, 6.3)--(.2, 6)--(.5, 6);
  \draw[black ] (.5,6)--(.5, 6.3)--(.2, 6.3)--(.2, 6)--(.5, 6);
  \draw (.1,6.5)  node [black ]  {{\small ${  R } $}};
  \fill[gray!30](1,6)--(-2,4)--(0,1)--( 3,3)--(1,6);
  
  \draw[black] (1,6)--(-2,4)--(0,1)--( 3,3)--(1,6);
  
  \draw (1, 6.4) node [black ]   {{\small $  Q_1  $}};
\draw (-2.4, 4)  node [black ]   {{\small $  Q_2 $}};
\draw (0, .6 ) node [black ]  {{\small $  Q_3 $}};
\draw (3.4, 3) node [black ]  {{\small $  Q_4 $}};
  \draw (0,3) node [black ]  {{  $Q_{s(\theta), \theta}$}};
  \draw [black,fill]  (1,6) circle [radius=0.06];
  \draw [black,fill]  (-2,4) circle [radius=0.06];
  \draw [black,fill] (0,1)circle [radius=0.06];
  \draw [black,fill] (3,3)circle [radius=0.06];

  \end{tikzpicture}
 \caption{ } 
   \end{center}
 \end{figure}

 Let us prove   Part b): 
 the vertices $Q_1$, ..,\, $Q_4$ of $Q_{h,\theta}$ lie  on the sides of $\Q_2$ if  and only if there exists $0<t\leq 1$ for which $Q_1=(t-\frac 12,\, \frac 12)$, $Q_2= (-\frac 12,\, t-\frac 12)$, $Q_3= ( \frac 12  -t,\, -\frac 12)$ and $Q_4= ( \frac 12,  \,  \frac 12 -t)$.
 If we let $\tan(\theta)$ be  the slope of the line that joins $Q_3$ and $Q_4$, we can see at once  that
 $  \tan(\theta)= \frac {1-t}{t}$; thus,  $ t=\frac{1}{1+\tan(\theta)}= \frac{\cos(\theta)}{\sin(\theta)+\cos(\theta)}.$ 
 The length of  the segment $[Q_3Q_4]$ is then:
 $$
s(\theta)= \sqrt{ t^2+(1-t )^2}= \frac{1}{\sin(\theta)+\cos(\theta)} $$
and   $E(\Z^2)$ is a frame on $ Q_{h,\theta}$  whenever $h\leq s(\theta)$. 
 
Let us show that $E(\Z^2)$ is not a frame on $Q_{h,\theta}$ whenever    $h>s(\theta)$. Indeed, if  $h>s(\theta)$,  the set  $Q_{h,\theta}-\Q_2$ has positive measure; we can find a small rectangle $R$  with sides parallel to the sides of $\Q_2$  and vectors $n_1$, ..., $n_4\in\Z^2$ such that $R+n_j\subset    Q_{h,\theta}-\Q_2$ (see Figure 3).  Thus, the integers translates of $Q_{h,\theta}$ overlap on the rectangles $R+n_j$ and  by Theorem \ref{T-frame}, $E(\Z^2)$ is not a frame on $L^2(Q_{h,\theta})$.

    \end{proof}

 \section{The translated parallelepiped} In this section we solve  Problem 3.  
 
 Let  $P\subset \R^d$ be a parallelepiped with sides parallel to  vectors $  v_1$, ...,\, $  v_d$.  
 We let $A=\{a_{i,j}\}_{1\leq i,j\leq d}$ be the matrix whose columns are $ v_1$, ...,\, $ v_d$; we let  $A^{-1}=\{b_{i,j}\}_{1\leq i,j\leq d}$.
 We prove the following 
 \begin{theorem}\label{T-frame-parall}
a) The set  $E(\Z^d)$  is a frame  on  $L^2(P)$ if and only if  $\det(A) \leq 1$ and  $\max_{{1\leq i,k, j\leq d}\atop{j\ne k}}|a_{i,j}|+  |a_{i,k}|\leq 1$ 
 
 b) The set $E(\Z^d)$  is a Riesz sequence   on  $L^2(P)$  if and only if $\det(A ) \ge1$ and   $\max_{1\leq i, j\leq d} |b_{i,j}| \leq 1$.
 \end{theorem}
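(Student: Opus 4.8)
The plan is to translate both statements into the combinatorial geometry of the integer translates of $P$, and then extract the determinant inequalities together with the entrywise conditions from that picture. Write $P=A\,\Q_d$ up to a translation, which is harmless: translating the domain $D$ only modulates the exponentials $e^{2\pi i n\cdot x}$ and leaves the frame and Riesz-sequence constants unchanged. By Theorem \ref{T-frame}, $E(\Z^d)$ is a frame on $L^2(P)$ exactly when the sets $\{P+n\}_{n\in\Z^d}$ overlap pairwise only on null sets (i.e. $P$ packs), and by Theorem \ref{T-Riesz} it is a Riesz sequence exactly when these translates cover $\R^d$. Since $|P|=|\det A|$, Corollary \ref{C-density} already forces $\det A\le 1$ in case (a) and $\det A\ge 1$ in case (b); this yields the determinant conditions and their necessity, so the remaining task is to pin down the geometric content of ``packing'' and of ``covering'' and reduce each to finitely many inequalities on the entries.

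For part (a) I would study overlaps through the difference body. Two translates meet in positive measure iff $\mathrm{int}(P)\cap\mathrm{int}(P+n)\ne\varnothing$, i.e. iff $n\in\mathrm{int}(P-P)=A(-1,1)^d$, equivalently iff $\|A^{-1}n\|_\infty<1$. Hence packing is precisely the statement that the lattice $A^{-1}\Z^d$ meets the open cube $(-1,1)^d$ only at the origin. The body $P-P=A[-1,1]^d$ is a centrally symmetric parallelepiped, and an integer vector $n$ is excluded iff one of its bounding hyperplanes separates $n$ from the origin. The key claim to establish is that only a finite family of ``short'' candidates $n$ (the nearest neighbours $\pm e_j$ and the diagonal combinations $\pm e_j\pm e_k$) can ever fall inside $(-1,1)^d$; testing exactly these vectors and rewriting the resulting inequalities in terms of the rows of $A$ should then reproduce the stated condition $\max_{j\ne k}\big(|a_{i,j}|+|a_{i,k}|\big)\le 1$.

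For part (b) the analogous program is to locate the would-be uncovered points. A point $x$ fails to be covered iff no translate $P+n$ contains it, and by convexity of $P$ such gaps, if present, should be detectable at the vertices of $\Q_d$ (the natural deep holes of $\Z^d$), exactly as in the rotated-square analysis of Problem 2. Granted $\det A\ge 1$, I would argue that covering holds iff each of these critical points is caught by some translate, and that this reduces to requiring every column of $A^{-1}$ to lie in the closed cube $[-1,1]^d$, i.e. $\max_{i,j}|b_{i,j}|\le 1$. The main obstacle in both parts is exactly this reduction from the a priori infinite family of lattice vectors (or of test points) to the finitely many extremal ones: one must show that no ``long'' or skew translate can create an overlap in (a), or rescue a gap in (b), that the finite test misses, and that the finite test genuinely reproduces the claimed entrywise form. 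Proving that the nearest-neighbour/vertex configuration is the true worst case, rather than merely a sufficient one, is the delicate heart of the argument.
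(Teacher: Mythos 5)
Your reduction of both parts to the geometry of the translates $\{P+n\}_{n\in\Z^d}$ is correct and is exactly where the paper starts: by Theorem~\ref{T-frame} the frame property is equivalent to packing, by Theorem~\ref{T-Riesz } the Riesz-sequence property is equivalent to covering, and the determinant restrictions come from Corollary~\ref{C-density}. Your treatment of part (b) --- apply $A^{-1}$ and ask when the cubes $[0,1]^d+A^{-1}n$ cover $\R^d$ --- is literally the paper's argument, and your difference-body criterion for part (a), namely that $P$ and $P+n$ overlap in positive measure iff $\|A^{-1}n\|_\infty<1$, is a clean equivalent of the paper's overlap condition. But at that point your proof stops: the passage from the infinite family of lattice vectors (resp.\ of points to be covered) to a finite, entrywise test on $A$ is the entire content of the theorem, and you name it as ``the key claim'' and ``the delicate heart'' without proving it. The paper does supply a mechanism for (a) that your plan lacks: induction on the dimension, with a two-dimensional base case (analyzing which translates $P+(\pm1,\pm1)$ can meet $P$) and an inductive step that projects the faces of $P$ onto coordinate hyperplanes. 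As written, your proposal is a correct restatement of the problem rather than a proof.

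Moreover, the deferred claim is not merely unproven --- it fails. Take $d=2$ with columns $v_1=(2.2,\,1.1)$, $v_2=(0,\,\tfrac14)$: the open difference body $A(-1,1)^2$ contains $(2,1)=\tfrac{10}{11}v_1$, so $P$ overlaps $P+(2,1)$, yet none of your test vectors $\pm e_j$, $\pm e_j\pm e_k$ lies in it (each would force a coefficient of $v_2$ of modulus at least $2$); your finite test would wrongly certify packing. In the other direction, testing $n=\pm e_j$ yields the condition $\|A^{-1}e_j\|_\infty\ge 1$, which concerns the columns of $A^{-1}$ and does not ``rewrite'' as the stated row condition on $A$: for $d=3$ take $A=\begin{pmatrix}1/2&1/2&1/2\\ 1/2&-1/2&0\\ 0&1/2&-1/2\end{pmatrix}$, so that $\det A=3/8$ and every row satisfies $|a_{i,j}|+|a_{i,k}|\le1$; then $v_1+v_2+v_3=(3/2,0,0)$, hence $e_1=\tfrac23(v_1+v_2+v_3)\in A(-1,1)^3$, and $P$ overlaps $P+e_1$, so by Theorem~\ref{T-frame} the set $E(\Z^3)$ is not a frame although the stated conditions of part (a) hold. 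Similarly for (b): if $A^{-1}=\begin{pmatrix}2&1\\1&1\end{pmatrix}$ then $A^{-1}\Z^2=\Z^2$, the cubes $[0,1]^2+A^{-1}n$ tile $\R^2$, and $E(\Z^2)$ is a Riesz sequence (indeed an orthonormal basis) on $L^2(P)$, while $\max_{i,j}|b_{i,j}|=2>1$. These examples show that the finite ``worst case'' configurations you propose are not the worst case, and, more seriously, that the entrywise equivalences you are trying to reproduce cannot be recovered by any such test: the obstruction is the genuine lattice geometry (shortest-vector and covering problems) that the paper itself alludes to in Section 7.1, and any correct argument must confront it, with the statement suitably amended.
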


    \begin{figure}[h!]
  \begin{center}
     \begin{tikzpicture}

  \draw[  black, ->] (-1, 0)--(7,0);
  \draw[  black, ->] (0,-1)--(0, 5);
  
  \draw[black] (0,0)--(2,2)--(3.5,2)--(1.5,0)--(0,0);
  \draw[black] (1.8 ,1.5)--(3.8 ,3.5)--(5.3 ,3.5)--(3.3 ,1.5)--(1.8 ,1.5);
 \draw(1, -.3) node[black] {$v_1$};
 \draw(1.8, -.3 ) node[black] {$1$};
 \draw (1.1  ,.5) node [black ]   {{\small $ P $}};
  \draw (6 ,2.8) node [black ]   {{\small $ P +(1,1) $}};
  \draw[black, dashed] (1.8, 1.5)-- (1.8,0);
  \draw[black, dashed] (1.8, 1.5)-- (0,1.5);
  
  \draw(-.2, 1.5 ) node[black] {$1$};
  \draw(1.7, 2 ) node[black] {$v_2$};

  \end{tikzpicture}
 \caption{ } 
   \end{center}
 \end{figure}
 
 \begin{proof}
  Observe that  if $|P|= \det(A) > 1$, the set $E(\Z^d)$ cannot be a frame on $L^2(P)$ so in part a)  we assume $\det(A) \leq 1$. Similarly, for  part b) we assume that $\det(A) \ge 1$.

\medskip
We prove part a)   by induction on the dimension $d$.  

By Theorem \ref{T-frame}, the set $E(\Z^d)$ is a frame on $L^2(P)$  if and only if  the integer translates  of $P$  overlap only on  sets of zero measure. 
In dimension $d=2$,   we let     $ v_1=(a_{1,1}, a_{2,1})$ and $ v_ 2=(a_{1,2}, a_{2,2})$ be the vectors that are parallel to the sides of $P$. When the components of $v_1$ and $v_2$ are non-negative, we can easily verify that  $P$ overlaps with $P+(1,1)$ if and only  if the sum   of the   projections of $v_1$ and $v_2$  on the $x_1$  and $x_2$ axes has measure  $\ge 1$ (see Figure 4).   Thus,    $P$  overlaps with $P+(1,1)$  if and only if   $  a_{1,1} + a_{1,2} \ge 1$ and  $a_{2,1} + a_{2,2} \ge 1$.   These conditions imply that  no pair of integer translates of $P$ intersect. 
For general   $v_1$ and $v_2$ we can similarly verify that the integer translates of $P$ do not intersect  if and only if $ | a_{1,1} |+ |a_{1,2}| \ge 1$ and  $|a_{2,1}| + |a_{2,2} |\ge 1$.

 We now assume that part a) of the theorem is valid  in dimension $d\ge 2$. We prove that is is valid also in dimension $d+1$. 
 
 Let $P$ be a parallelepiped in $\R^{d+1}$.
  The integer translates of $P$ overlap on sets of positive measure in $\R^{d+1}$ if and only if the integer translates of the faces of $P$   overlap   on sets of positive measure in $\R^{d}$.
Let  $P_h$ be the face of $P$ spanned by the vectors  $ v_1$, ..., $ v_{h-1}, \  v_{h+1}$, ...,\, $ v_{d+1}$. 
  
  Let $ e_1=(1, 0, ...,\,0)$, ..., $ e_{d+1} =(0,  ...0,\,1)$ be the standard orthonormal basis in $\R^{d+1}$ and let $H_j$ be the orthogonal complement of  $e_j$. Clearly, the  integer translates of $P_h$ overlap  if and only if  the integer translates of the  orthogonal projections  of $P_h$  on  the  $H_j$'s overlap. 
  
 The projection of  $P_h$ on $H_k$ is a parallelepiped in $\R^d$ spanned by the vectors 
 $ w_1$, ..., $ w_{h-1}, \  w_{h+1}$, ...,\, $ w_{d+1}$ where $ w_j$ is the projection of $v_j$ on $H_k$,  i.e., it is  the vector  $ v_j$ with the $k$-th  component removed.  
  By assumptions,   $\max_{{1 \leq i,k, j\leq d+1\atop{i\ne k}}\atop{k\ne j\ne h}}\{|a_{i,j }| + |a_{i,k }|  \}\leq 1$. 
This  inequality  is valid for every face of $P$ and for every projection, and so we have that $\max_{{1 \leq i,j, k\leq d+1 }\atop{k\ne j}}\{|a_{i,k}|+ |a_{i,j}|\}\leq 1$ as required.

 \medskip
 We now  prove   part b).  By Theorem \ref{T-Riesz }, the   integer translates of $P$ must cover $\R^d$.  
Since  $P$ is   the image of the unit cube $[0,1]^d$ via the linear transformation $A(x)= A  x$, we can write  $P=A([0,1]^d)$.  Thus, $E(\Z^d)$ is a Riesz sequence in $L^2(P)$ if and only if 
 $  \bigcup_{n\in\Z^d} (A([0,1]^d) +  n) =\R^d $, or:
 $$
 A^{-1}\left(\bigcup_{n\in\Z^d} (A([0,1]^d) +  n)\right)= \bigcup_{n\in\Z^d} ( [0,1]^d  + A^{-1} n)=\R^d.$$
 The translates of the unit cube $[0,1]^d$ cover $\R^d$ if and only if the components of the vectors $A^{-1}e_k$  are  all  $\leq 1$, i.e.,  if and only if $\max_{1\leq i,j\leq d}|b_{i,j}|\leq 1$.

 \end{proof}

    \subsection{The shortest vector problem}
    
    Let $A:\R^d\to\R^d$  be linear and invertible;  consider the parallelepiped $P=A(Q)$, where $Q  = [0,1]^d.$  The  sides of $P$ are parallel to the  columns of  the matrix that represents $A$. 
    
    By Corollary \ref{C-complete}, the  set $E(\Z^d)$ is complete in $L^2(A(Q))$ if and only if   the integer translates of $A(Q)$ do not intersect.
	The integer translates of $A(Q)$ intersect if and only if there are $x,\ y \in Q $ such that $Ax = Ay + n$, for some nonzero $n \in \Z^d$. We can also say that the translates of $A(Q_d)$ intersect if and only if there exist $x, y \in Q$ and $n \in \Z^d$ such that $A^{-1}n = x-y$, i.e., if and only if there exists $n \in \Z^d$ such that $A^{-1}n \in D = \{w \mid \| w\|_{\infty} < 1 \}$.

	\medskip
	These considerations show that    Problem 3 is related to the so-called {\it shortest vector problems} (SVP): Given a lattice $\mathcal{L}$ and a norm $||\ ||$ on $\mathbb{R}^d$, find the minimum length $\lambda = \min_{0 \neq v \in \mathcal{L}}\| v\|$ of a nonzero lattice point.  The SVP is known to be NP-hard (see \cite{Aj}). 
	

The conjectured intractability of the SVP and of other optimization problems on lattices  is central in the  construction of secure lattice-based cryptosystems.  
  For more information on this problem see e.g. \cite{AD}, \cite{Kn} and the references cited in these papers.
	\medskip

\end{document}